\newcommand\de{\delta}
\newcommand\De{\Delta}
\newcommand\la{\lambda}
\renewcommand\th{\theta} 
\newcommand\Th{\Theta}
\newcommand\re{{\mathbb R}}
\DeclareMathOperator{\im}{im}
\def\x{\mathbf{x}}
\def\1{\mathbf{1}}
\DeclareMathOperator\diag{diag}
\DeclareMathOperator\dist{dist}
\DeclareMathOperator{\tr}{trs}
\DeclareMathOperator{\coker}{coker}
\DeclareMathOperator\IN{in}
\DeclareMathOperator\minors{minors}
\DeclareMathOperator\SNF{SNF}
\DeclareMathOperator\SP{sp}
\begin{document}
\title{The Degree-Distance and Transmission-Adjacency Matrices}
\author{Carlos A. Alfaro\thanks{
{\tt alfaromontufar@gmail.com}, 
Banco de M\'exico, 
Mexico City, Mexico} \and
Octavio Zapata\thanks{
{\tt octavioz@ciencias.unam.mx},
Departamento de Matemáticas, Facultad de Ciencias,
Universidad Nacional Autónoma de México, 
Mexico City, Mexico}}
\date{}
\maketitle
\begin{abstract}
    Let $G$ be a connected graph with adjacency matrix $A(G)$. 
    The distance matrix $D(G)$ of $G$ has rows and columns indexed by $V(G)$ with $uv$-entry equal to the distance $\dist(u,v)$ which is the number of edges in a shortest path between the vertices $u$ and $v$.
    The transmission $\tr(u)$ of $u$ is defined as $\sum_{v\in V(G)}\dist(u,v)$.
    Let $\tr(G)$ be the diagonal matrix with the transmissions of the vertices of $G$ in the diagonal, and $\deg(G)$ the diagonal matrix with the degrees of the vertices in the diagonal.
    In this paper we investigate the Smith normal form (SNF) and the spectrum of the matrices $D^{\deg}_+(G):=\deg(G)+D(G)$, $D^{\deg}(G):=\deg(G)-D(G)$, $A^{\tr}_+(G):=\tr(G)+A(G)$ and $A^{\tr}(G):=\tr(G)-A(G)$.
    In particular, we explore how good the spectrum and the SNF of these matrices are for determining graphs up to isomorphism.
    We found that the SNF of $A^{\tr}$ has an interesting behaviour when compared with other classical matrices. 
    We note that the SNF of $A^{\tr}$ can be used to compute the structure of the sandpile group of certain graphs.
    We compute the SNF of $D^{\deg}_+$, $D^{\deg}$, $A^{\tr}_+$ and $A^{\tr}$ for several graph families.
    We prove that complete graphs are determined by the SNF of $D^{\deg}_+$, $D^{\deg}$, $A^{\tr}_+$ and $A^{\tr}$.
    Finally, we derive some  results about the spectrum of $D^{\deg}$ and $A^{\tr}$.
\end{abstract}

\section{Introduction}

We will focus on matrices derived from the adjacency and the distance matrices of graphs, for this we need the following definitions.
Let $G$ be a simple connected graph with $n$ vertices.
Let $A(G)$ denote the adjacency matrix of $G$ and let $\deg(G)$ denote the diagonal matrix with the degrees of the vertices of $G$ in the diagonal.
The \emph{distance} $\dist(u,v)$ between two vertices $u$ and $v$ is the number of edges in a shortest path between them.
The \emph{distance matrix} $D(G)$ of $G$ is the matrix with rows and columns indexed by the vertices of $G$ with the $uv$-entry equal to $\dist(u,v)$.
Let $\tr(u)$ denote \emph{transmission} of vertex $u$, which is defined as $\sum_{v\in V(G)}\dist(u,v)$.
Let $\tr(G)$ denote the diagonal matrix with the transmissions of the vertices in the diagonal.
We define 
\begin{itemize}
    \item the \emph{degree-distance} matrix $D^{\deg}(G)$ of $G$ as $\deg(G)-D(G)$,
    \item the \emph{transmission-adjacency} matrix $A^{\tr}(G)$ of $G$ as $\tr(G)-A(G)$,
    \item the \emph{signless degree-distance} matrix $D^{\deg}_+(G)$ of $G$ as $\deg(G)+D(G)$, and 
    \item the \emph{signless transmission-adjacency} matrix $A^{\tr}_+(G)$ of $G$ as $\tr(G)+A(G)$.
\end{itemize}

One of the objectives of spectral graph theory is to understand to what extent graphs are characterized by their spectra.
Starting from the eigenvalues of a matrix $M$ associated with a graph, it seeks to deduce combinatorial properties of the graph.
The eigenvalues of $M$ along with their multiplicities are called the $M$-\emph{spectrum} of $G$. 
We say that two graphs are $M$-\emph{cospectral} if they have the same $M$-spectrum.
A graph $G$ is \emph{determined by its $M$-spectrum} (or $M$-DS for short) if the only graphs which are $M$-cospectral with $G$ are isomorphic to $G$.

The graph isomorphism problem has motivated the interest in understanding  which graphs are uniquely determined by its $M$-spectrum.
Haemers conjectured that the fraction of graphs on $n$ vertices with a $M$-cospectral mate (i.e., a non-isomorphic graph which is $M$-cospectral) tends to zero as $n$ tends to infinity.
Godsil and McKay \cite{gm} performed a numerical study for cospectral graphs with $n\leq 9$ vertices with respect adjacency matrix. 
Haemers and Spence \cite{HS2004} continued the study for $n = 10, 11$.
Later, Brouwer and Spence \cite{bs} complemented the study for $n=12$. 
Recently, Aouchiche and Hansen \cite{ah} presented computational results in which they studied cospectrality for the distance, distance Laplacian and distance signless Laplacian matrices of all the connected graphs on up to 10 vertices. 

An important question is whether it is possible to define a matrix $M$ of $G$ such that every graph becomes $M$-DS. 
In \cite[Section 2.5]{vDH}, it was shown that the answer to this question is positive. However, in this case it is more difficult to check cospectrality of these matrices
than testing isomorphism. 
If there would be an easily (polynomial-time) computable matrix $M$ for which every graph becomes $M$-DS, then the graph isomorphism problem would be solved. 
One can say that $M$  is not such a matrix for which all graphs are $M$-DS
when $M$ is one of the commonly used matrices associated with graphs (adjacency, Laplacian, distance matrices, signless Laplacian, normalized Laplacian), since there exist many examples of non-isomorphic graphs that share the same $M$-spectrum.
This leaves open the possibility of amplifying or replacing spectra with the use of more refined representations for obtaining more faithful graph information.

With this in mind, we would like to do a similar analysis for the Smith normal which we now recall.
Two $n\times n$ matrices $M$ and $N$ are \emph{equivalent}, denoted by $M\sim N$, if there exist $P,Q\in GL_n(\mathbb{Z})$ such that $N=PMQ$.
That is, $M$ can be transformed to $N$ by applying the following elementary row and column operations which are invertible over the ring of integers:
\begin{enumerate}
  \item Swapping any two rows or any two columns.
  \item Adding integer multiples of one row/column to another row/column.
  \item Multiplying any row/column by $\pm 1$.
\end{enumerate}

Given a square integer matrix $M$, the Smith normal form (SNF) of $M$ is the unique equivalent diagonal matrix $\diag(f_1,f_2,\dots,f_r,0,\dots,0)$ whose non-zero entries are non-negative and satisfy $f_i$ divides $f_{i+1}$, and $r$ is the rank of $M$. 
The diagonal elements of the SNF are known as \emph{invariant factors} or \emph{elementary divisors} of $M$.

The SNF of integer matrices of graphs have been of great interest since it describes the Abelian group obtained from the cokernel.
If we consider an $m\times n$ matrix $M$ with integer entries as a linear map $M:\mathbb{Z}^n\rightarrow \mathbb{Z}^m$, the \emph{cokernel} of $M$ is the quotient module $\mathbb{Z}^{m}/\im(M)$.
The structure theorem for finitely generated Abelian groups implies that the cokernel of $M$ can be described as:
$\coker(M)\cong \mathbb{Z}_{f_1(M)} \oplus \mathbb{Z}_{f_2(M)} \oplus \cdots \oplus\mathbb{Z}_{f_{r}(M)} \oplus \mathbb{Z}^{m-r}$,
where $r$ is the rank of $M$ and $f_1(M), f_2(M), \ldots, f_{r}(M)$ are the invariant factors  of $M$.
This finitely generated Abelian group becomes a graph invariant when we take the matrix $M$ to be a matrix associated with the graph.
The cokernel of the adjacency matrix $A(G)$ is known as the \emph{Smith group} of $G$, and the torsion part of the cokernel of the Laplacian matrix $L(G)$ is known as the \emph{critical group} of $G$ (also known as \emph{sandpile group} of $G$).
For the reader interested in the sandpile group, we recommend the book \cite{Klivans}.
Several sandpile groups have been computed, see \cite{alfaval0} and its references.
Also, the SNF of distance, distance Laplacian, and walk matrices have been studied.
In \cite{HW}, the SNF of the distance matrices were determined for trees, wheels, cycles, and complements of cycles and were partially determined for complete multipartite graphs. 
In \cite{BK}, the SNF of the distance matrices of unicyclic graphs and of the wheel graph with trees attached to each vertex were obtained.
An account on the Smith normal form in combinatorics can be found in \cite{stanley}.

For each graph $G$, let $M(G)$ be a symmetric matrix with integer entries. 
We say that the graphs $G$ and $H$ are $M$-\emph{coinvariant} if the SNFs of $M(G)$ and $M(H)$, computed over $\mathbb{Z}$, are the same.
Coinvariant graphs were introduced in \cite{vince}.
Later, Abiad and Alfaro \cite{aa} started to study whether the portion of graphs that have a $M$-coinvariant mate is smaller than the portion of graphs having a $M$-cospectral mate for a particular matrix $M$. 
Cospectrality and coinvariancy both play an important role in the famous graph isomorphism problem. While it is unknown whether testing graph isomorphism is a hard problem or not, determining whether two graphs are cospectral or coinvariant can be done in polynomial time \cite{kannan,SNFP}. It is also known that testing coinvariancy is experimentally faster than testing cospectrality \cite{akm}. Thus one can focus on testing isomorphism among coinvariant graphs.

In this paper we investigate how good are the spectrum and the SNF of the matrices $A^{\tr}$, $A^{\tr}_+$, $D^{\deg}_+$ and $D^{\deg}$ for determining graphs.
We found that $A^{\tr}$ performs better when compared with previous explored matrices by Abiad and Alfaro \cite{aa}.
In Section~\ref{sec:CoinvariantGraphs}, we extend previous enumeration results to the spectrum and the SNF of matrices $A^{\tr}$, $A^{\tr}_+$, $D^{\deg}_+$ and $D^{\deg}$ of trees with up to 20 vertices.
We found no pair of cospectral trees with respect $D^{\deg}_+$, $D^{\deg}$ and $A^{\tr}$ with up to 20 vertices.
Neither a pair of coinvariant trees with respect $A^{\tr}_+$ and $A^{\tr}$ with up to 20 vertices.
Therefore, it is conjectured that trees are determined by spectrum of $D^{\deg}_+$, $D^{\deg}$ and $A^{\tr}$, and determined by the SNF of $A^{\tr}_+$ and $A^{\tr}$.
Then, in Section~\ref{sec:SNF} we outline how the SNF of $A^{\tr}$ could be used to compute the structure of the sandpile group of certain graphs.
We compute the SNF of $D^{\deg}_+$, $D^{\deg}$, $A^{\tr}_+$ and $A^{\tr}$ for several graph families.
Also, we prove that complete graphs are determined by the SNF of $D^{\deg}_+$, $D^{\deg}$, $A^{\tr}_+$ and $A^{\tr}$.
Finally, in Section~\ref{sec:spectrum} 
we study the spectrum of $A^{\tr}$ and $D^{\deg}$. 
We show that for distance-regular graphs, the spectrum of $A^{\tr}$ is determined by the spectrum of the adjacency matrix and the spectrum of $D^{\deg}$ is determined by the spectrum of the distance matrix. 
We also provide combinatorial bounds for the two smallest eigenvalues of $A^{\tr}$
and give results about their multiplicity.

\section{Cospectral and Coinvariant Graphs}\label{sec:CoinvariantGraphs}

Since we will use several graph distance matrices, we focus on connected graphs so that our enumeration results are comparable. 
Given a connected graph $G$, aside from $A(G)$, $D(G)$, $A^{\tr}(G)$, $A^{\tr}_+(G)$, $D^{\deg}_+(G)$ and $D^{\deg}(G)$, we will use the following matrices: the Laplacian matrix $L(G)$, the signless Laplacian matrix $Q(G)$, the distance Laplacian matrix $D^L(G)$ and the distance signless Laplacian matrix $D^Q(G)$. 
In \cite{aa}, enumeration results were obtained for the spectrum and the invariant factors.
We recall some of these results for the matrices $A$, $L$, $Q$, $D$, $D^L$ and $D^Q$. 
Then, we compare them with the ones obtained for the matrices $A^{\tr}$, $A^{\tr}_+$, $D^{\deg}$ and $D^{\deg}_+$. 
We performed our computations over Sagemath \cite{sage}.

\begin{table}[ht]
    \centering
	{\small
    \begin{tabular}{cccccccc}
		\hline
        $n$ & 4 & 5 & 6 & 7 & 8 & 9 & 10\\
        \hline
        $|\mathcal{G}_n|$ & 6 & 21 & 112 & 853 & 11,117 & 261,080 & 11,716,571\\
        \hline
        $|\mathcal{G}^{sp}_n(A)|$ & 0 & 0 & 2 & 63 & 1,353 & 46,930 & 2,462,141 \\
        $|\mathcal{G}^{sp}_n(L)|$ & 0 & 0 & 4 & 115 & 1,611 & 40,560 & 1,367,215 \\
        $|\mathcal{G}^{sp}_n(Q)|$ & 0 & 2 & 10 & 80 & 1,047 & 17,627 & 615,919  \\
        $|\mathcal{G}^{sp}_n(D)|$ & 0 & 0 & 0 & 22 & 658 & 25,058 & 1,389,986   \\
        $|\mathcal{G}^{sp}_n(D^L)|$ & 0 & 0 & 0 & 43 & 745 & 20,455 & 787,851\\
        $|\mathcal{G}^{sp}_n(D^Q)|$ & 0 & 2 & 6 & 38 & 453 & 8,168 & 319,324\\
        \hline
        $|\mathcal{G}^{in}_n(A)|$ & 4 & 20 & 112 & 853 & 11,117 & 261,080 & 11,716,571 \\
        $|\mathcal{G}^{in}_n(L)|$ & 2 & 8 & 57 & 526 & 8,027 & 221,834 & 11,036,261 \\
        $|\mathcal{G}^{in}_n(Q)|$ & 2 & 11 & 78 & 620 & 7,962 & 201,282 & 10,086,812 \\
        $|\mathcal{G}^{in}_n(D)|$ & 2 & 15 & 102 & 835 & 11,080 & 260,991 & 11,716,249\\
        $|\mathcal{G}^{in}_n(D^L)|$  & 0 &   0 &   0 &  18 &   455 & 16,505 & 642,002\\
        $|\mathcal{G}^{in}_n(D^Q)|$ & 0 &   2 & 4 & 20 & 259 & 7,444 &  264,955\\
        \hline
	\end{tabular}
	}
\caption{Number of connected graphs with at least one cospectral mate and one coinvariant mate for $A$, $L$, $Q$, $D$, $D^L$ and $D^Q$.}
	\label{Tab:cospectralcoinvariantALQDDLDQ}
\end{table}

Denote by $\mathcal{G}_n$ the set of connected graphs with $n$ vertices.
Let $\mathcal{G}^{sp}_n(M)$ be the set of graphs in $\mathcal{G}_n$ which have at least one cospectral mate in $\mathcal{G}_n$ with respect to the matrix $M$.
Similarly, let $\mathcal{G}^{in}_n(M)$ be the set of graphs in $\mathcal{G}_n$ which have at least one coinvariant mate in $\mathcal{G}_n$ with respect to the matrix $M$.
Table~\ref{Tab:cospectralcoinvariantALQDDLDQ} shows the enumeration of $\mathcal{G}^{sp}_n(M)$ and $\mathcal{G}^{in}_n(M)$ for several  matrices.
From this table, it follows that the spectrum and the Smith normal form of $D^Q$ could be the best for distinguishing connected graphs.

A lot of research has been devoted to understand cospectral graphs, but much less has been dedicated to understand the coinvariance relationship and its potential to characterize graphs.
One reason for this could be that there is a large proportion of connected graphs having a $M$-coinvariant mate with respect to the matrices $A$, $L$, $Q$ and $D$, as Table \ref{Tab:cospectralcoinvariantALQDDLDQ} shows. 
However, in \cite{aa}, it was discovered the potential of using the SNF of $D^L$ and $D^Q$ to characterize graphs.

\begin{table}[ht]
    \centering
	{\small
    \begin{tabular}{cccccccc}
		\hline
        $n$ & 4 & 5 & 6 & 7 & 8 & 9 & 10\\
        \hline
        $|\mathcal{G}_n|$ & 6 & 21 & 112 & 853 & 11,117 & 261,080 & 11,716,571\\
        \hline
        $|\mathcal{G}^{sp}_n(D^{\deg})|$ & 0 & 2 & 6 & 40 & 485 & 9,784 & 355,771 \\

        $|\mathcal{G}^{sp}_n(D^{\deg}_+)|$ & 0 & 0 & 0 & 61 & 901 & 24,095 & 852,504 \\

        $|\mathcal{G}^{sp}_n(A^{\tr})|$ & 0 & 2 & 6 & 38 & 413 & 7,877 & 299,931 \\

        $|\mathcal{G}^{sp}_n(A^{\tr}_+)|$ & 0 & 0 & 0 & 43 & 728 & 19,757 & 765,421 \\
        \hline
        $|\mathcal{G}^{in}_n(D^{\deg})|$  &  2  &  2  &  6  &  34 & 538  & 17,497  & 902,773 \\
        $|\mathcal{G}^{in}_n(D^{\deg}_+)|$   &  2  & 11  & 46  & 495 & 7,169 & 209,822 & 10,815,879 \\
        $|\mathcal{G}^{in}_n(A^{\tr})|$   &  0  &  2  &  4  & 22  & 240  & 6,642   & 237,118 \\
        $|\mathcal{G}^{in}_n(A^{\tr}_+)|$     &  0  &  0  &  0  & 16  & 456  & 15,952  & 605,625  \\
        \hline
	\end{tabular}
	}
\caption{Number of connected graphs with at least one cospectral mate and one coinvariant mate for $A^{\tr}(G)$, $A^{\tr}_+(G)$, $D^{\deg}_+(G)$ and $D^{\deg}(G)$.}
	\label{Tab:coinvariantAtrDdeg}
\end{table}

In Table~\ref{Tab:coinvariantAtrDdeg}, it is shown the number of connected graphs with at least one cospectral mate and one coinvariant mate for the matrices $A^{\tr}$, $A^{\tr}_+$, $D^{\deg}_+$ and $D^{\deg}$.
In order to do a simpler comparison between previous and new results, we follow \cite{aa} in defining the \emph{spectral uncertainty} $\SP_n(M)$ with respect to $M$ as the ratio $|\mathcal{G}^{sp}_n(M)|/|\mathcal{G}_n|$, and the \emph{invariant uncertainty} $\IN_n(M)$ with respect to $M$ as the ratio $|\mathcal{G}^{in}_n(M)|/|\mathcal{G}_n|$.
Figure~\ref{fig:spectruminvariantQDLDQ} displays the spectral and the invariant uncertainty for the matrices that better determine graphs.
We only show the five best parameters to distinguish graphs.
According to our results, the SNF of $A^{\tr}$ performs better than the rest of the parameters for distinguishing graphs for all considered matrices.
This is followed by the SNF of $D^Q$ and the spectrum of $A^{\tr}$, $D^Q$ and $D^{\deg}$, respectively.

\begin{figure}[ht]
    \centering
    \begin{tikzpicture}[trim axis left]
    \begin{axis}[
        scale only axis,
        title={},
        xlabel={$n$},
        width=8cm, 
        height=5cm,
        xmin=4, xmax=10,
        ymin=0.0, ymax=0.1,
        xtick={4,5,6,7,8,9,10},
        legend style ={ 
            row sep=-0.2cm,
            at={(1.03,1)}, 
            anchor=north west,
            draw=black, 
            fill=white,
            align=left
        },
        ymajorgrids=true,
        grid style=dashed,
        legend columns=1
    ]
    \addplot [
        dashed,
        every mark/.append style={solid, fill=gray},
        mark=square*,
        legend entry={\tiny $\SP_n(D^{\deg})$}
        ]
        coordinates {
        (4,0.0) (5,0.09523809523809523) (6,0.05357142857142857) (7,0.04689331770222743) (8,0.04362687775478996) (9,0.037475103416577296) (10,0.030364771399413702)
        };

    \addplot [
        dashed,
        every mark/.append style={solid, fill=gray},
        mark=diamond*,
        legend entry={\tiny $\SP_n(A^{\tr})$}
        ]
        coordinates {
        (4,0.0) (5,0.09523809523809523) (6,0.05357142857142857) (7,0.044548651817116064) (8,0.03715031033552217) (9,0.030170828864715796) (10,0.025598871888370754)
        };
        
    \addplot[
        dashed,
        every mark/.append style={solid, fill=gray},
        mark=*,
        legend entry={\tiny $\SP_n(D^Q)$}
        ]
        coordinates {
        (4,0.0) (5,0.09523809523809523) (6,0.05357142857142857) (7,0.044548651817116064) (8,0.0407484033462265) (9,0.03128542975333231) (10,0.02725404898754081)
        };
     
     \addplot [
        dashed,
        every mark/.append style={solid, fill=white},
        mark=*,
        legend entry={\tiny $\IN_n(D^Q)$}
        ]
        coordinates {
        (4,0.0) (5,0.09523809523809523) (6,0.03571428571428571) (7,0.023446658851113716) (8,0.023297652244310515) (9,0.02851233338440325) (10,0.022613698154519784)
        };

    \addplot [
        dashed,
        every mark/.append style={solid, fill=white},
        mark=diamond*,
        legend entry={\tiny $\IN_n(A^{\tr})$}
        ]
        coordinates {
        (4,0.0) (5,0.09523809523809523) (6,0.03571428571428571) (7,0.02579132473622509) (8,0.02158855806422596) (9,0.025440478014401715) (10,0.020237832382870382)
        };
    
    \end{axis}
    \end{tikzpicture}
    \caption{The fraction of graphs on $n$ vertices that have at least one cospectral mate with respect to a certain matrix is denoted as $sp$. The fraction of graphs on $n$ vertices with respect to a certain matrix that have at least one coinvariant mate is denoted as $in$.}
    \label{fig:spectruminvariantQDLDQ}
\end{figure}
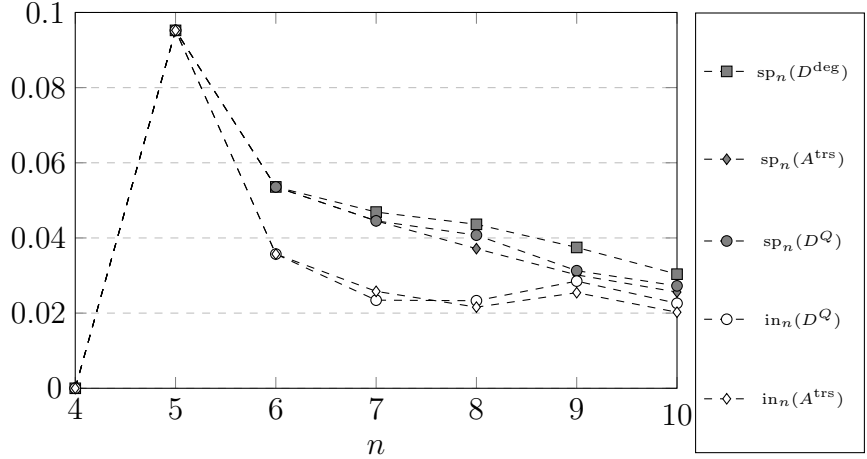

\subsection{Cospectral and Coinvariant Trees}
We end up this section with an observation on cospectral and coinvariant trees. 
Aouchiche and Hansen \cite{ah} reported enumeration results on cospectral trees with at most 20 vertices with respect to $D$, $D^L$ and $D^Q$.
For $D$, they found that among the 123,867 trees on 18 vertices, there are two pairs of $D$-cospectral mates.
Among the 317,955 trees on 19 vertices, there are six pairs of $D$-cospectral mates.
There are 14 pairs of $D$-cospectral mates over all the 823,065 trees on 20 vertices.
And surprisingly, after the enumeration of all 1,346,023 trees on at most 20 vertices, they found no $D^L$-cospectral mates and no $D^Q$-cospectral mates.
This fact lead Aouchiche and Hansen to conjecture that every tree is determined by its distance Laplacian spectrum, and by its distance signless Laplacian spectrum.

Analogously, Abiad and Alfaro \cite{aa} reported enumeration results for the SNF of $D$, $D^L$ and $D^Q$ of trees. 
Hou and Woo \cite{HW} extended Graham and Pollak celebrated formula, that states $\det(D(T_{n+1}))=(-1)^nn2^{n-1}$ for any tree $T_{n+1}$ with $n+1$ vertices, to obtain $\SNF(D(T_{n+1}))=\sf{I}_2\oplus 2\sf{I}_{n-2}\oplus (2n)$.
This implies that all trees on $n$ vertices have the same SNF of its distance matrix $D$.
After enumerating coinvariant trees with at most 20 vertices with respect to $D^L$ and $D^Q$, they found no $D^L$-coinvariant mates and no $D^Q$-coinvariant mates among all trees with up to 20 vertices.
Thus, it is also conjectured that almost all trees are determined by the SNF of its $D^L$, and analogously, by the SNF of its $D^Q$.

We extended previous enumeration results to the spectrum and the SNF of matrices $A^{\tr}$, $A^{\tr}_+$, $D^{\deg}_+$ and $D^{\deg}$ of trees with up to 20 vertices.
Our findings are the following.
There are no $D^{\deg}_+$-cospectral trees nor $D^{\deg}$-cospectral trees nor $A^{\tr}$-cospectral trees with up to 20 vertices.
However, there are four $A^{\tr}_+$-cospectral trees with 20 vertices and no $A^{\tr}_+$-cospectral trees with 19 vertices.

On the other hand, there are no $A^{\tr}_+$-coinvariant trees nor $A^{\tr}$-coinvariant trees with up to 20 vertices.
However, we found two $D^{\deg}$-coinvariant trees with 14 vertices,
6 $D^{\deg}$-coinvariant trees with 16 vertices,
14 $D^{\deg}$-coinvariant trees with 17 vertices,
22 $D^{\deg}$-coinvariant trees with 18 vertices,
40 $D^{\deg}$-coinvariant trees with 19 vertices, and 
76 $D^{\deg}$-coinvariant trees with 20 vertices.

With respect to the matrix $D^{\deg}_+$, we found two $D^{\deg}_+$-coinvariant trees with 9 vertices, 
6 $D^{\deg}_+$-coinvariant trees with 10 vertices, 
20 $D^{\deg}_+$-coinvariant trees with 11 vertices, 
46 $D^{\deg}_+$-coinvariant trees with 12 vertices, 
148 $D^{\deg}_+$-coinvariant trees with 13 vertices, 
373 $D^{\deg}_+$-coinvariant trees with 14 vertices, 
1093 $D^{\deg}_+$-coinvariant trees with 15 vertices, 
2912 $D^{\deg}_+$-coinvariant trees with 16 vertices, 
8189 $D^{\deg}_+$-coinvariant trees with 17 vertices, 
22551 $D^{\deg}_+$-coinvariant trees with 18 vertices, 
64738 $D^{\deg}_+$-coinvariant trees with 19 vertices, and 
183211 $D^{\deg}_+$-coinvariant trees with 20 vertices.

Considering this evidence, it is possible that trees are determined by spectrum of $D^{\deg}_+$, $D^{\deg}$ and $A^{\tr}$, and also determined by the SNF of $A^{\tr}_+$ and $A^{\tr}$.

\section{The SNF of $A^{\tr}$, $A^{\tr}_+$, $D^{\deg}_+$ and $D^{\deg}$}\label{sec:SNF}

The SNF of the Laplacian matrix is commonly used to compute the algebraic structure of the sandpile group, since it coincides with the torsion part of the cokernel of the Laplacian matrix.
Let $L^q(G)$ be the Laplacian matrix of a connected graph $G$, where the column and row associated with a vertex $q\in V(G)$ are removed.
This matrix is known as a \emph{reduced Laplacian}, and its SNF also keeps the structure of the sandpile but without the torsion part, for this reason the reduced Laplacian is also used for computing the sandpile group of connected graphs.
It is interesting to note that $A^{\tr}$ can be regarded as the reduced Laplacian of a graph as follows.
We have that $\deg(v)\leq \tr(v)$ for any $v\in V(G)$, and  equality holds when $v$ is adjacent with each vertex $u\in V(G)\setminus\{v\}$.
Let $G$ be a connected graph different from the complete graph.
Suppose $H$ is the graph $G$ with an additional vertex $q$, such that $q$ is adjacent with each $v\in V(G)$ by mean of $\tr(v)-\deg(v)$ edges.
Since $G$ is not the complete graph there is at least one vertex with $\tr(v)-\deg(v)>0$.
Therefore, $A^{\tr}(G)=L^q(H)$ and the transmission-adjacency matrix is an additional matrix that can be used to compute the sandpile group and the number of spanning trees of $H$.

\begin{theorem}
    Let $G$ be a connected graph with $n$ vertices, different from the complete graph, and let $H$ be the graph $G$ with an additional vertex $q$, such that $q$ is adjacent with each $v\in V(G)$ by mean of $\tr(v)-\deg(v)$ edges.
    Then, if the Smith normal form of $A^{\tr}$ is $\diag(f_1,\dots,f_n)$, then the sandpile group $K(H)$ of $H$ is isomorphic to $\mathbb{Z}_{f_1}\oplus\cdots\oplus\mathbb{Z}_{f_n}$ and the number of spanning trees $\tau(H)$ of $H$ equals $\prod_{i=1}^nf_i$.
\end{theorem}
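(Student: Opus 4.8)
The plan is to recognize that this statement is an immediate consequence of the identity $A^{\tr}(G)=L^q(H)$ established in the paragraph preceding the theorem, combined with two classical facts about reduced Laplacians of connected multigraphs: Kirchhoff's Matrix--Tree theorem and the description of the sandpile (critical) group as the cokernel of a reduced Laplacian (see \cite{Klivans}). So the theorem is really a packaging result rather than a computation.

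First I would record that $H$ is connected. Since $G$ is connected and $G\neq K_n$, there is at least one vertex $v$ with $\tr(v)-\deg(v)>0$, so $q$ is joined to $V(G)$ by at least one edge; hence $H$ is a connected multigraph on $n+1$ vertices. Next I would re-verify the identity $A^{\tr}(G)=L^q(H)$ at the level of matrix entries: for $v\in V(G)$ the degree of $v$ in $H$ is $\deg_G(v)+(\tr(v)-\deg(v))=\tr(v)$, which gives the diagonal of $L(H)$ restricted to $V(G)$, while the number of $uv$-edges in $H$ for $u,v\in V(G)$ is unchanged from $G$, giving off-diagonal entries $-A(G)_{uv}$. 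Deleting the row and column indexed by $q$ from $L(H)$ therefore leaves exactly $\tr(G)-A(G)=A^{\tr}(G)$.

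Then I would invoke the standard theory of reduced Laplacians. For a connected multigraph $H$ and any vertex $q$, the reduced Laplacian $L^q(H)$ is nonsingular, the Matrix--Tree theorem gives $\det L^q(H)=\tau(H)$, and $\coker(L^q(H))=\mathbb{Z}^{V(H)\setminus\{q\}}/\im(L^q(H))$ is isomorphic to $K(H)$. Because $A^{\tr}(G)=L^q(H)$ is a nonsingular integer matrix, all its invariant factors $f_1,\dots,f_n$ are positive, and by the structure theorem recalled in the introduction $\coker(A^{\tr}(G))\cong\mathbb{Z}_{f_1}\oplus\cdots\oplus\mathbb{Z}_{f_n}$; combining with the isomorphism above yields $K(H)\cong\mathbb{Z}_{f_1}\oplus\cdots\oplus\mathbb{Z}_{f_n}$. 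Finally, since equivalence over $\mathbb{Z}$ preserves the absolute value of the determinant and the $f_i$ are nonnegative, $\tau(H)=\lvert\det A^{\tr}(G)\rvert=\prod_{i=1}^n f_i$.

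There is no substantial obstacle in the argument; the only points needing a little care are confirming that $H$ is connected (which is exactly what the hypothesis $G\neq K_n$ provides, and without which the diagonal entry $\tr(v)=\deg_G(v)$ would make $A^{\tr}(G)=L(G)$ singular) and citing the Matrix--Tree theorem and the cokernel description of the sandpile group in the multigraph setting rather than the simple-graph setting.
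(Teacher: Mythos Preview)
Your proposal is correct and matches the paper's approach exactly: the paper does not give a separate proof of this theorem but presents it as an immediate consequence of the identity $A^{\tr}(G)=L^q(H)$ established in the preceding paragraph, together with the standard description of the sandpile group and Matrix--Tree theorem for the reduced Laplacian (with \cite{Klivans} as the background reference). Your added care about the connectedness of $H$ and the role of the hypothesis $G\neq K_n$ makes explicit what the paper leaves implicit.
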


This kind of connection is useful for computing the algebraic structure of the sandpile group.
For instance, in \cite{alfavilla} it was used the cycle-intersection matrix for computing the structure of the sandpile groups of outerplanar graphs.

\begin{example}
    Let $\ltimes$ be the cricket graph.
    Let $H$ be the graph $\ltimes$ with an additional vertex $q$, such that $q$ is adjacent with each $v\in V(\ltimes)$ by mean of $\tr(v)-\deg(v)$ edges, such as in Figure~\ref{fig:cricketgraph}.
    Then
    \[
    A^{\tr}(\ltimes)=
     \begin{bmatrix}
         6 &  0 &  0 & -1 & -1\\
         0 &  7 &  0 &  0 & -1\\
         0 &  0 &  7 &  0 & -1\\
        -1 &  0 &  0 &  6 & -1\\
        -1 & -1 & -1 & -1 &  4
     \end{bmatrix}
    \]
    and
    \[
    L(H)=
     \begin{bmatrix}
         6 &  0 &  0 & -1 & -1 & -4\\
         0 &  7 &  0 &  0 & -1 & -6\\
         0 &  0 &  7 &  0 & -1 & -6\\
        -1 &  0 &  0 &  6 & -1 & -4\\
        -1 & -1 & -1 & -1 &  4 & 0\\
        -4 & -6 & -6 & -6 & -4 & 20
     \end{bmatrix}.
    \]
    From this it follows that the SNF of $A^{\tr}(\ltimes)$ is $\diag(1, 1, 1, 7, 812)$ and the SNF of $L(H)$ is $\diag(1, 1, 1, 7, 812,0)$.
    Thus the sandpile group of $H$ is isomorphic to $\mathbb{Z}_7\oplus\mathbb{Z}_{812}$.
\end{example}

\begin{figure}[ht]
    \centering
    \begin{tabular}{c@{\extracolsep{1.5cm}}c}
        \begin{tikzpicture}[rotate=90,scale=.7,thick]
    	\tikzstyle{every node}=[minimum width=0pt, inner sep=2pt, circle]
            \clip (1.6,1.5) rectangle (-1.6,-1.5);
    	\draw (-.5,-.9) node (v1) [draw] {\tiny 4};
    	\draw (.5,-.9) node (v2) [draw] {\tiny 1};
    	\draw (0,0) node (v3) [draw] {\tiny 5};
    	\draw (-.5,.9) node (v4) [draw] {\tiny 3};
    	\draw (.5,.9) node (v5) [draw] {\tiny 2};
    	\draw (v1) -- (v2);
    	\draw (v1) -- (v3);
    	\draw (v2) -- (v3);
    	\draw (v3) -- (v4);
    	\draw (v3) -- (v5);
        \end{tikzpicture}
         &  
         \begin{tikzpicture}[rotate=90,scale=.7,thick]
            \clip (1.6,3.5) rectangle (-1.6,-1.5);
    	\tikzstyle{every node}=[minimum width=0pt, inner sep=2pt, circle]
    	\draw (-.5,-.9) node (v1) [draw] {};
    	\draw (.5,-.9) node (v2) [draw] {};
    	\draw (0,0) node (v3) [draw] {};
    	\draw (-.5,.9) node (v4) [draw] {};
    	\draw (.5,.9) node (v5) [draw] {};
            \draw (0,3) node (q) [draw] {};
    	\draw (v1) -- (v2);
    	\draw (v1) -- (v3);
    	\draw (v2) -- (v3);
    	\draw (v3) -- (v4);
    	\draw (v3) -- (v5);
            \draw (q) edge[bend right=45] (v1);
            \draw (q) edge[bend right=55] (v1);
            \draw (q) edge[bend right=65] (v1);
            \draw (q) edge[bend right=75] (v1);
            \draw (q) edge[bend left=45] (v2);
            \draw (q) edge[bend left=55] (v2);
            \draw (q) edge[bend left=65] (v2);
            \draw (q) edge[bend left=75] (v2);
            \draw (q) edge[bend right=10] (v4);
            \draw (q) edge[bend right=20] (v4);
            \draw (q) edge[bend right=30] (v4);
            \draw (q) edge[bend left=10] (v4);
            \draw (q) edge[bend left=20] (v4);
            \draw (q) edge (v4);
            \draw (q) edge[bend right=10] (v5);
            \draw (q) edge[bend right=20] (v5);
            \draw (q) edge[bend left=30] (v5);
            \draw (q) edge[bend left=10] (v5);
            \draw (q) edge[bend left=20] (v5);
            \draw (q) edge (v5);
        \end{tikzpicture}
        \\
        $\ltimes$ & $H$
    \end{tabular}
    
    \caption{A super graph constructed from the cricket graph.}
    \label{fig:cricketgraph}
\end{figure}
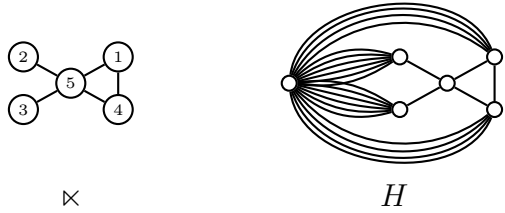

Let us now introduce the notion of determinantal ideals of graphs which will be used to compute the SNF and prove that complete graphs are determined by the SNF of $A^{\tr}$, $A^{\tr}_+$, $D^{\deg}$ and $A^{\deg}_+$.
Consider an $n\times m$ matrix $M$ whose entries are in the polynomial ring $\mathbb{Z}[X]$ with $X$ a set of $l$ indeterminates $x_1, \dots, x_l$.
We will assume that $n\leq m$ to simplify notation, because otherwise we can transpose the matrix without changing the determinants of its sub-matrices.
For $k\in [n]:=\{1, \dots, n\}$, let $\mathcal{I}=\{r_j\}_{j=1}^k$ and $\mathcal{J}=\{c_j\}_{j=1}^k$ be two sequences such that
\[
1\leq r_1 < r_2 < \cdots < r_k \leq n \text{ and } 1\leq c_1 < c_2 < \cdots < c_k \leq m.
\]
Let $M[\mathcal{I;J}]$ denote the submatrix of a matrix $M$ induced by the rows with indices in $\mathcal{I}$ and columns with indices in $\mathcal{J}$.
The determinant of $M[\mathcal{I;J}]$ is called a $k$-\emph{minor} of $M$.
We denote by $\minors_k(M)$ the set of all $k$-minors of $M$.

\begin{definition}
For $k\in[n]$, the $k$-\emph{th} \emph{determinantal ideal of a matrix}  $M$ with entries in $\mathbb{Z}[X]$, denoted $I_k(M)$, is the ideal generated by $\minors_k(M)$.
\end{definition}

Let $I\subseteq \mathbb{Z}[X]$ be an ideal.
The \emph{variety} $V(I)$ of $I$ is defined as the set of common roots between polynomials in $I$.
In several contexts it will be more convenient to consider an extension $\mathcal{P}$ of $\mathbb{Z}$ to define the variety $$V^\mathcal{P}(I):=\left\{ {\bf a}\in \mathcal{P}^l : f({\bf a}) = 0 \text{ for all } f\in I \right\}.$$

The following result shows the contention between ideals and varieties.

\begin{proposition}\cite{Nbook}\label{prop:chaininclusionideals}
Let $M$ be an $n\times m$ matrix with entries in $\mathbb{Z}[X]$.
Then, it holds that
\[
\langle 1\rangle \supseteq I_1(M) \supseteq \cdots \supseteq I_n(M) \supseteq \langle 0\rangle
\]
and
\[
\emptyset\subseteq V^\mathcal{P}(I_1(M)) \subseteq \cdots \subseteq V^\mathcal{P}(I_n(M)) \subseteq \mathcal{P}^l.
\]
\end{proposition}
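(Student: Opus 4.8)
The plan is to establish the two chains separately: the descending chain of ideals follows from Laplace (cofactor) expansion of minors, and the ascending chain of varieties is then a formal consequence of the order-reversing nature of the operator $V^{\mathcal{P}}(\cdot)$ together with the evaluation of the two trivial endpoints.

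For the ideal chain, fix $k$ with $2\le k\le n$ and let $\det M[\mathcal{I};\mathcal{J}]$ be an arbitrary $k$-minor of $M$, where $\mathcal{I}=\{r_1<\cdots<r_k\}$ and $\mathcal{J}=\{c_1<\cdots<c_k\}$. Expanding this determinant along its first row gives
\[
\det M[\mathcal{I};\mathcal{J}] \;=\; \sum_{t=1}^{k} (-1)^{1+t}\, M_{r_1,c_t}\,\det M\bigl[\mathcal{I}\setminus\{r_1\};\,\mathcal{J}\setminus\{c_t\}\bigr].
\]
Each factor $\det M[\mathcal{I}\setminus\{r_1\};\mathcal{J}\setminus\{c_t\}]$ is a $(k-1)$-minor of the \emph{same} matrix $M$, hence an element of $\minors_{k-1}(M)$, while each coefficient $M_{r_1,c_t}$ lies in $\mathbb{Z}[X]$; therefore $\det M[\mathcal{I};\mathcal{J}]\in I_{k-1}(M)$. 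As the chosen $k$-minor was arbitrary, every generator of $I_k(M)$ lies in $I_{k-1}(M)$, so $I_k(M)\subseteq I_{k-1}(M)$. The two extreme inclusions are immediate: $I_1(M)\subseteq \mathbb{Z}[X]=\langle 1\rangle$ because $I_1(M)$ is an ideal of $\mathbb{Z}[X]$, and $\langle 0\rangle\subseteq I_n(M)$ because every ideal contains $0$. This proves the first displayed chain.

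For the variety chain, recall that $V^{\mathcal{P}}(\cdot)$ is inclusion-reversing: if $I\subseteq J$ are ideals of $\mathbb{Z}[X]$ and ${\bf a}\in\mathcal{P}^l$ is a common zero of all polynomials in $J$, then in particular it is a common zero of all polynomials in $I$, so $V^{\mathcal{P}}(J)\subseteq V^{\mathcal{P}}(I)$. Applying this to each inclusion $I_k(M)\subseteq I_{k-1}(M)$ yields $V^{\mathcal{P}}(I_{k-1}(M))\subseteq V^{\mathcal{P}}(I_k(M))$, and chaining these together gives $V^{\mathcal{P}}(I_1(M))\subseteq\cdots\subseteq V^{\mathcal{P}}(I_n(M))$. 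Finally, $V^{\mathcal{P}}(\langle 1\rangle)=\emptyset$ since the constant polynomial $1$ has no zeros in the nonzero ring $\mathcal{P}$; $V^{\mathcal{P}}(\langle 0\rangle)=\mathcal{P}^l$ since the zero polynomial vanishes identically; and $V^{\mathcal{P}}(I_n(M))\subseteq\mathcal{P}^l$ trivially. Combining the endpoints with the middle chain gives the second displayed statement.

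There is no serious obstacle here; the argument is essentially bookkeeping. The only points requiring a little care are (i) that the cofactor expansion of a $k$-minor of $M$ produces $(k-1)$-minors of $M$ itself, which is exactly why minors are indexed by ordered row and column sets rather than by passing to a fixed submatrix, and (ii) that $\mathcal{P}$ must be a nonzero commutative unital ring for $V^{\mathcal{P}}(\langle 1\rangle)$ to be empty. One could equally expand along any row or column, or invoke the standard fact from commutative algebra that the Fitting ideals of a matrix form a descending chain; the self-contained cofactor argument above is the most transparent route.
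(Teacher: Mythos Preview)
Your argument is correct: the Laplace expansion shows that every $k$-minor is a $\mathbb{Z}[X]$-linear combination of $(k-1)$-minors of $M$, giving $I_k(M)\subseteq I_{k-1}(M)$, and the variety chain then follows from the inclusion-reversing property of $V^{\mathcal{P}}$. The paper does not supply its own proof of this proposition; it merely cites Northcott's \emph{Finite Free Resolutions} for the result, so there is no in-paper argument to compare against, and what you have written is exactly the standard self-contained justification.
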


An ideal is said to be \emph{trivial} or \emph{unit} if it is equal to $\langle1\rangle$, which is in fact equal to $\mathbb{Z}[X]$. The \emph{algebraic co-rank} $\gamma(M)$ of $M$ is the maximum integer $k$ for which $I_k(M)$ is trivial.

\begin{definition}\label{def:polynomialmat}
Given a graph $G$ with $n$ vertices and a set of indeterminates $X=\{x_u \, : \, u\in V(G)\}$, we define the following polynomial matrices:
\begin{itemize}
    \item $A_X(G):=\diag(x_1, \dots,x_n)-A(G)$,
    \item $D_X(G):=\diag(x_1, \dots,x_n)+D(G)$.
\end{itemize}
\end{definition}

The determinantal ideals associated with the matrices $A_X$ and $D_X$ have been studied in \cite{akm,alfaro2,alflin,at,corrval} under the name of \emph{critical ideals} and \emph{distance ideals}, respectively.

A useful way to compute the invariant factors is given by the following result, which we shall use to connect determinatal ideals with SNF.

\begin{theorem}[Elementary divisors theorem]\label{theo:edt}\cite{jacobson}
Let $M$ be an $n\times m$ matrix with entries in a \emph{principal ideal domain} (PID).
Then the $k$-\emph{th} invariant factor $f_k(M)$ of $M$ is equal to $\Delta_k(M)/ \Delta_{k-1}(M)$, where $\Delta_k(M)$ is the \emph{greatest common divisor} of the $k$-minors of $M$ and $\Delta_0(M)=1$.
\end{theorem}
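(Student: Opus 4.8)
The plan is to reduce the computation of both sides to the Smith normal form of $M$, using as the one substantive ingredient the fact that the greatest common divisors $\Delta_k$ are invariants of the equivalence class of $M$. Fix a set of representatives of the associate classes of the PID $R$ (legitimate since $R$ is a gcd domain), so that $\Delta_k(M)$ is a well-defined element of $R$. The first step is to show: if $N=PMQ$ with $P\in GL_n(R)$ and $Q\in GL_m(R)$, then $\Delta_k(M)=\Delta_k(N)$. By the Cauchy--Binet formula every $k$-minor of $N=PMQ$ is an $R$-linear combination of $k$-minors of $M$, so each $k$-minor of $N$ lies in $I_k(M)$ and hence $\Delta_k(M)\mid\Delta_k(N)$; applying the same argument to $M=P^{-1}NQ^{-1}$ gives the reverse divisibility, so the two are associates. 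In ideal language this says $I_k(M)=I_k(N)$, strengthening the membership in Proposition~\ref{prop:chaininclusionideals} to stability under equivalence.

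For the second step, invoke the existence of the Smith normal form over a PID to write $M\sim S$ with $S=\diag(f_1,\dots,f_r,0,\dots,0)$, $r=\rk M$ and $f_1\mid f_2\mid\cdots\mid f_r$; by definition the $f_i$ are the invariant factors $f_i(M)$. By the first step $\Delta_k(M)=\Delta_k(S)$. Now compute $\Delta_k(S)$ directly: a $k\times k$ submatrix of a diagonal matrix has nonzero determinant only when its row-index set $\mathcal{I}$ and column-index set $\mathcal{J}$ coincide, in which case the determinant is $\prod_{i\in\mathcal{I}}f_i$ (with the convention $f_i=0$ for $i>r$). Hence $I_k(S)$ is generated by the products $f_{i_1}\cdots f_{i_k}$ over all $1\le i_1<\cdots<i_k\le n$; since $f_1\mid f_2\mid\cdots\mid f_r$, every such product is a multiple of $f_1f_2\cdots f_k$, which is itself among them, so $\Delta_k(S)=f_1f_2\cdots f_k$ for $1\le k\le r$, while $\Delta_k(S)=0$ for $k>r$ and $\Delta_0(S)=1$ by the empty-product convention. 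Combining, $\Delta_k(M)=f_1\cdots f_k$ for $k\le r$, and therefore for $1\le k\le r$ we get $\Delta_k(M)/\Delta_{k-1}(M)=f_k=f_k(M)$; for $k=r+1$ the quotient is $0/(f_1\cdots f_r)=0=f_{r+1}(M)$.

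The only step that is not a routine determinantal computation is the invariance of $\Delta_k$ under unimodular row and column operations, and even there the substance is entirely absorbed into the Cauchy--Binet formula, after which the argument is symmetric and formal. Two small points worth stating carefully: one should assume only the \emph{existence} of the Smith normal form, not its uniqueness---indeed uniqueness then drops out, since the argument shows the $f_k(M)$ are functions of the equivalence-invariant quantities $\Delta_k(M)$---and one should fix the conventions $\Delta_0(M)=1$ and $f_k(M)=0$ for $k>\rk M$ so that the formula $f_k(M)=\Delta_k(M)/\Delta_{k-1}(M)$ holds uniformly.
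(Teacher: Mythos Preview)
The paper does not prove this theorem; it is quoted as a classical result with a citation to Jacobson's \emph{Basic Algebra I} and then used as a black box. Your argument is the standard textbook proof---invariance of the determinantal gcds $\Delta_k$ under unimodular equivalence via Cauchy--Binet, followed by a direct computation of $\Delta_k$ on the Smith normal form itself---and it is correct as written, including the careful handling of the rectangular case and the conventions for $k>r$.
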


Let $M$ be a $n\times n$ integer matrix and let $M_X$ be the polynomial matrix $\diag(x_1,\dots,x_n)-M$.
We can recover the invariant factors of $M$ from the determinantal ideals of $M_X$ as described in the following result.
\begin{proposition}\label{prop:evalmultiplevariables}\cite{akm}
Let $M$ be an $n\times n$ matrix with entries in $\mathbb{Z}$, ${\bf c}$ a row vector in $\mathbb{Z}^n$ and $X=\{x_1,\dots,x_n\}$ a set of indeterminates.
Let $M_X=\diag(x_1,\dots,x_n)-M$.
Then the ideal $I^\mathcal{R}_k(M_X)$ evaluated at $X={\bf c}$ is generated by $\Delta_k({\bf c}I_n-M)$, the \emph{gcd} of the $k$-minors of ${\bf c}I_n-M$ over $\mathbb{Z}$.
\end{proposition}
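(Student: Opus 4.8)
The plan is to reduce everything to the single observation that substituting $x_i = c_i$ is a ring homomorphism $\varepsilon\colon \mathcal{R}[X]\to\mathcal{R}$, together with the fact that a ring homomorphism commutes with the formation of determinants of submatrices. First I would record that applying $\varepsilon$ entrywise to $M_X=\diag(x_1,\dots,x_n)-M$ produces precisely the integer matrix $\diag(c_1,\dots,c_n)-M={\bf c}I_n-M$, and that, since every $k$-minor $\det M_X[\mathcal{I};\mathcal{J}]$ is given by the Leibniz formula as a polynomial with integer coefficients in the entries of $M_X$, we have $\varepsilon\bigl(\det M_X[\mathcal{I};\mathcal{J}]\bigr)=\det\bigl(({\bf c}I_n-M)[\mathcal{I};\mathcal{J}]\bigr)$ for every pair of index sets $\mathcal{I},\mathcal{J}$. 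Hence $\varepsilon$ carries the generating set $\minors_k(M_X)$ of $I^{\mathcal{R}}_k(M_X)$ onto $\minors_k({\bf c}I_n-M)$.

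Next I would make precise what ``$I^{\mathcal{R}}_k(M_X)$ evaluated at $X={\bf c}$'' means and check it can be computed generator-wise. Since $\varepsilon$ is a surjective ring homomorphism (it restricts to the identity on the constants $\mathcal{R}$), the image under $\varepsilon$ of any ideal $I=(g_1,\dots,g_m)$ of $\mathcal{R}[X]$ is an ideal of $\mathcal{R}$, and $\varepsilon(I)=(\varepsilon(g_1),\dots,\varepsilon(g_m))$: the containment $\supseteq$ is immediate, while for $\subseteq$ an arbitrary element $\sum_i h_ig_i$ of $I$ maps to $\sum_i\varepsilon(h_i)\varepsilon(g_i)$. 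Combining this with the previous step, the evaluated ideal equals the ideal of $\mathcal{R}$ generated by the $k$-minors of ${\bf c}I_n-M$.

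Finally I would show that this ideal is the principal ideal generated by $\Delta_k({\bf c}I_n-M)$. Writing the $k$-minors of ${\bf c}I_n-M$ as integers $a_1,\dots,a_m$ and $d:=\Delta_k({\bf c}I_n-M)=\gcd(a_1,\dots,a_m)$ (with $d=0$ when all $a_i$ vanish), Bézout's identity over $\mathbb{Z}$ gives integers $u_i$ with $d=\sum_i u_ia_i$, so the image of $d$ lies in the ideal of $\mathcal{R}$ generated by the images of the $a_i$; conversely each $a_i$ is an integer multiple of $d$, hence lies in $(d)$. Therefore $(a_1,\dots,a_m)\mathcal{R}=(d)\mathcal{R}$, which is the asserted equality. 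This last step uses only that $\mathbb{Z}\to\mathcal{R}$ is a ring map, so the statement holds for any extension $\mathcal{R}$ of $\mathbb{Z}$, not merely a PID.

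I do not expect a genuine obstacle here: the proposition is a bookkeeping bridge showing that the multivariate determinantal ideals of $M_X$ specialize, at an integer point ${\bf c}$, to exactly the classical gcd data of the $k$-minors of ${\bf c}I_n-M$ — the data that, via Theorem~\ref{theo:edt}, recovers the invariant factors of ${\bf c}I_n-M$. The only place demanding a little care is the middle step: being unambiguous about the meaning of evaluating an ideal and verifying that the evaluation of a finitely generated ideal is generated by the evaluated generators; and, if one insists on allowing an arbitrary ring $\mathcal{R}$ rather than $\mathbb{Z}$, one must also invoke the Bézout identity over $\mathbb{Z}$, as above, to see that the evaluated ideal remains principal on $\Delta_k$.
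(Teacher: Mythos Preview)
Your argument is correct. The paper does not actually prove this proposition; it simply quotes it from \cite{akm}, so there is no in-paper proof to compare against. Your approach---using that the evaluation map $\varepsilon\colon\mathcal{R}[X]\to\mathcal{R}$ is a surjective ring homomorphism, that it therefore carries $\minors_k(M_X)$ onto $\minors_k(\diag(c_1,\dots,c_n)-M)$, and then invoking B\'ezout over $\mathbb{Z}$ to identify the ideal generated by these integer minors with the principal ideal on their gcd---is the standard and expected one, and all three steps are handled cleanly. Your remark that the final step only needs the structure map $\mathbb{Z}\to\mathcal{R}$, so the conclusion holds for any ring $\mathcal{R}$ receiving $\mathbb{Z}$, is a worthwhile observation given that the paper never specifies what $\mathcal{R}$ is.
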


We can apply the previous result, for example, by evaluating the determinantal ideals of $A_X(G)$ at $X=\deg(G)$ to recover the SNF of the Laplacian matrix.

\begin{example}\label{example:grobner1}
Consider the cycle with 5 vertices and
\[
A_X(C_5)=
\begin{bmatrix}
    x_0 & -1 &  0 &  0 &  -1\\
    -1 & x_1 & -1 &  0 &   0\\
     0 & -1 & x_2 & -1 &   0\\
     0 &  0 & -1 & x_3 &  -1\\
    -1 &  0 &  0 &  -1 & x_4\\
\end{bmatrix}.
\]
In what follows, we give a Gr\"obner bases of the critical ideals over $\mathbb{Z}[X]$.
Note $I_1(A_X(C_5))$, $I_2(A_X(C_5))$ and $I_3(A_X(C_5))$ are trivial.
The fourth critical ideal of $C_5$ is equal to
\[
\langle x_0x_1 + x_3 - 1, x_1x_2 + x_4 - 1, x_2x_3 + x_0 - 1, x_0x_4 + x_2 - 1, x_3x_4 + x_1 - 1\rangle.
\]
Meanwhile, the fifth critical ideal of $C_5$ is generated by the polynomial
\[
x_0x_1x_2x_3x_4 - x_0x_1x_2 - x_1x_2x_3 - x_0x_1x_4 - x_0x_3x_4 - x_2x_3x_4 + x_0 + x_1 + x_2 + x_3 + x_4 - 2.
\]
By evaluating the critical ideals of $C_5$ at $(0,0,0,0,0)$, we obtain that the SNF of $A(C_5)$ is $\diag(1,1,1,1,2)$.
On the other hand, if we evaluate the critical ideals at \[(\deg(v_1),\deg(v_2),\deg(v_3),\deg(v_4))=(2,2,2,2,2),\] we obtain that the SNF of $L(C_4)$ is $\diag(1,1,1,5,0)$.
Similarly, evaluating the critical ideals of $C_4$ at $$(\tr(v_1),\tr(v_2),\tr(v_3),\tr(v_4))=(6,6,6,6,6)$$ and $-(\tr(v_1),\tr(v_2),\tr(v_3),\tr(v_4))$, we obtain that the SNF of $A^{\tr}(C_4)$ and $A^{\tr}_+(C_4)$ are $\diag(1, 1, 1, 41, 164)$ and $(1, 1, 1, 29, 232)$, respectively.
\end{example}

Similarly, distance ideals can be used to compute the SNF of the matrices $D$, $D^L$, $D^Q$, $D^{\deg}$ and $D^{\deg}_+$.
Let us recall the critical ideals of complete graphs to compute the SNF of matrices of complete graphs.

\begin{theorem}\cite{corrval}\label{teo:criticalidealscompletegraph}
    Let $K_n$ be the complete graph with $n\geq2$ vertices and $1\leq m\leq n-1$, then
    \[
        B_m= \left\{ \prod_{i \in \mathcal{I}}(x_i+1) \, \big| \, \mathcal{I}\subset [n], \, |\mathcal{I}|=m-1 \right\}
    \]
    is a reduced Gr\"obner basis of $I_m\left(A_X(K_n)\right)$ with respect to the graded lexicographic order.
    And
    \[
        \det(A_X(K_n))=\prod_{i=1}^n(x_i+1)-\sum_{j=1}^n\prod_{i\neq j}(x_i+1)
    \]
\end{theorem}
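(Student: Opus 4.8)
The plan is to prove the two assertions—the closed determinant and the Gröbner basis property—largely independently, deducing the Gröbner claim from a complete classification of the $m$-minors of $A_X(K_n)$, and treating the determinant as a single rank-one update.

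\emph{The determinant.} Since $A(K_n)=J-I$, where $J$ is the all-ones matrix and $I$ the identity, I would first record that $A_X(K_n)=\Lambda-J$ with $\Lambda:=\diag(x_1+1,\dots,x_n+1)$. Writing $J=\mathbf{1}\mathbf{1}^{\top}$, the claimed formula is exactly the rank-one update identity $\det(\Lambda-\mathbf{1}\mathbf{1}^{\top})=\det\Lambda-\sum_{j=1}^{n}\prod_{i\neq j}(x_i+1)$, and $\det\Lambda=\prod_{i}(x_i+1)$. This is a polynomial identity over $\mathbb{Z}[X]$: by the matrix-determinant lemma it holds on the Zariski-dense locus $\prod_i(x_i+1)\neq0$ over $\mathbb{Q}$, and two polynomials agreeing there coincide. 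This settles the second display with essentially no work.

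\emph{Minor classification, hence $I_m(A_X(K_n))=\langle B_m\rangle$.} Fix $1\le m\le n-1$ and an $m$-minor with row set $R$ and column set $C$, $|R|=|C|=m$; set $S=R\cap C$. Because every off-diagonal entry of $A_X(K_n)$ equals $-1$, each row indexed by $R\setminus C$ is the constant row $(-1,\dots,-1)$ and each column indexed by $C\setminus R$ is constant. Three cases arise: (i) if $|S|\le m-2$ then $|R\setminus C|\ge 2$, so two rows coincide and the minor is $0$; (ii) if $|S|=m-1$ the submatrix is $A_X(K_S)$ bordered by one constant row and one constant column, and evaluating the determinant formula on $S\cup\{*\}$ with the extra diagonal entry specialized to $-1$ gives the minor $\pm\prod_{i\in S}(x_i+1)$, i.e. $\pm$ a generator of $B_m$; (iii) if $S=R=C$ the minor is principal and, by the determinant formula, equals $\prod_{i\in R}(x_i+1)-\sum_{j\in R}\prod_{i\in R\setminus\{j\}}(x_i+1)$, every term of which is a multiple of a member of $B_m$. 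Case (ii) shows $B_m\subseteq I_m(A_X(K_n))$, while (i)--(iii) show $I_m(A_X(K_n))\subseteq\langle B_m\rangle$; hence $I_m(A_X(K_n))=\langle B_m\rangle$.

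\emph{The Gröbner basis property.} Under graded lex the leading term of $g_S:=\prod_{i\in S}(x_i+1)$ is the squarefree monomial $\prod_{i\in S}x_i$ with coefficient $1$; since all leading coefficients are units, Buchberger's criterion applies over $\mathbb{Z}$ as over a field. Pairs $S,S'$ with $S\cap S'=\emptyset$ have coprime leading terms and are dispatched by Buchberger's first criterion. For overlapping $S,S'$ put $D=S\cap S'$, $A=S\setminus S'$, $B=S'\setminus S$; then the $S$-polynomial factors as
\[
\textstyle\prod_{i\in D}(x_i+1)\bigl(\prod_{i\in B}x_i\prod_{i\in A}(x_i+1)-\prod_{i\in A}x_i\prod_{i\in B}(x_i+1)\bigr).
\]
Expanding $\prod_{i\in B}x_i=\sum_{B'\subseteq B}(-1)^{|B\setminus B'|}\prod_{i\in B'}(x_i+1)$ (and symmetrically for $A$) rewrites this as an explicit integer combination $\sum_T c_T\, g_T$ with $T\subseteq D\cup A\cup B$; the extreme terms $B'=B$ and $A'=A$ both equal $g_{D\cup A\cup B}$ and cancel, so every surviving $g_T$ has leading monomial $\prod_{i\in T}x_i$ properly dividing $\mathrm{lcm}(\mathrm{LT}(g_S),\mathrm{LT}(g_{S'}))=\prod_{i\in D\cup A\cup B}x_i$ and of strictly smaller degree. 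This is exactly the hypothesis of the refined form of Buchberger's criterion, so the $S$-polynomial reduces to zero. Reducedness is then immediate: the leading coefficients are $1$, the non-leading monomials of each $g_S$ have support of size $<m-1$ and so are divisible by no degree-$(m-1)$ squarefree leading monomial, and $\prod_{i\in S}x_i$ is divisible by $\prod_{i\in S'}x_i$ only when $S'=S$. I expect the sole genuine obstacle to be this last paragraph—verifying that the overlapping $S$-polynomials reduce to zero—where the binomial re-expansion producing cancellation of the top term and the resulting strict degree drop below the lcm are the crux; the minor classification and the determinant are formal by comparison.
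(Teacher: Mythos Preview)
The paper does not prove this theorem; it merely quotes it from Corrales--Valencia \cite{corrval}. So there is no in-paper argument to compare against, and your proposal supplies a proof where the paper gives none.

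Your argument is essentially correct. The determinant via the rank-one update $A_X(K_n)=\diag(x_i+1)-\mathbf{1}\mathbf{1}^{\top}$ is clean, and the three-case minor classification (two identical constant rows when $|R\cap C|\le m-2$; specializing the extra diagonal entry to $-1$ when $|R\cap C|=m-1$; the determinant formula again for principal minors) is exactly the right structure and yields $I_m=\langle B_m\rangle$ without difficulty.

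There is one sloppiness in the Buchberger step worth tightening. After cancelling the top term you write the $S$-polynomial as an integer combination $\sum_T c_T g_T$, but the $g_T$ appearing have $|T|\ge m-1$, and for $|T|>m-1$ they are \emph{not} elements of $B_m$; the ``refined Buchberger criterion'' you invoke requires a representation in terms of the proposed basis itself. The fix is immediate: each surviving $g_{D\cup A\cup B'}$ with $B'\subsetneq B$ equals $g_S\cdot\prod_{i\in B'}(x_i+1)$, and symmetrically each $g_{D\cup B\cup A'}$ is a polynomial multiple of $g_{S'}$. This gives the $S$-polynomial as $h\cdot g_S+h'\cdot g_{S'}$ with $\mathrm{LM}(h g_S),\mathrm{LM}(h' g_{S'})$ of degree strictly below $|D\cup A\cup B|=\deg(\mathrm{lcm})$, which is the genuine standard-representation hypothesis. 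With that one line added, the Gr\"obner and reducedness claims go through as you wrote them.
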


One particular determinantal ideal is the case when all variables are equal to one variable, say $t$, in this case, the matrix $A_t(G)=tI_n-A(G)$ is used instead of $A_X$.
The determinantal ideals of $A_t(G)$ are known as \emph{characteristic ideals}, see \cite{akm,absv,yibo}.
The following corollary follows directly from Theorem~\ref{teo:criticalidealscompletegraph}.

\begin{corollary}
    Let $K_n$ be the complete graph with $n\geq2$ vertices and $1\leq m\leq n-1$, then $I_m(A_t(K_n))\subseteq\mathbb{Z}[t]$ is equal to $\langle (t+1)^{m-1} \rangle$ for $1\leq m \leq n-1$, and $\det(A_t(K_n))=(t+1)^n-n(t+1)^{n-1}$.
\end{corollary}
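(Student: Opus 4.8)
The goal is to derive the characteristic ideals of the complete graph from the critical ideals already described in Theorem~\ref{teo:criticalidealscompletegraph}. The plan is to specialize the set of indeterminates $X=\{x_1,\dots,x_n\}$ to a single indeterminate $t$, i.e.\ to set $x_i=t$ for all $i$, and track what happens to the reduced Gr\"obner basis $B_m$ and to the determinant formula.

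First I would observe that $I_m(A_t(K_n))$ is exactly the image of $I_m(A_X(K_n))$ under the substitution homomorphism $\vphi\colon\mathbb{Z}[x_1,\dots,x_n]\to\mathbb{Z}[t]$ sending each $x_i\mapsto t$; this is because minors are polynomials and substitution commutes with taking determinants, so $\minors_m(A_t(K_n))=\vphi(\minors_m(A_X(K_n)))$, and the ideal generated by the images of a generating set is the image of the ideal. Applying $\vphi$ to the generating set $B_m$, every product $\prod_{i\in\mathcal I}(x_i+1)$ with $|\mathcal I|=m-1$ becomes $(t+1)^{m-1}$. Hence $I_m(A_t(K_n))=\langle(t+1)^{m-1}\rangle$ for $1\le m\le n-1$. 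Similarly, substituting into the determinant formula of Theorem~\ref{teo:criticalidealscompletegraph} gives $\det(A_t(K_n))=(t+1)^n-\sum_{j=1}^n(t+1)^{n-1}=(t+1)^n-n(t+1)^{n-1}$, which is the claimed formula.

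One small subtlety worth a sentence in the write-up: Theorem~\ref{teo:criticalidealscompletegraph} asserts $B_m$ is a \emph{reduced Gr\"obner basis} of $I_m(A_X(K_n))$, but all we actually need for the corollary is that $B_m$ \emph{generates} $I_m(A_X(K_n))$ as an ideal, which is part of being a Gr\"obner basis; the Gr\"obner/reduced structure is not needed after specialization since $\mathbb{Z}[t]$-ideals here are principal anyway. So the argument does not require transporting any Gr\"obner-basis property through $\vphi$.

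I do not expect a genuine obstacle here: this is a direct corollary, and the only thing to be careful about is making explicit that determinantal ideals are compatible with ring homomorphisms (substitution of indeterminates), so that $I_m(\vphi(A_X))=\vphi(I_m(A_X))$. If one wanted to be maximally self-contained, the single-line justification is that for any homomorphism of commutative rings $\vphi\colon R\to S$ and any matrix $N$ over $R$, one has $I_m(\vphi(N))=\vphi(I_m(N))S$, applied with $R=\mathbb{Z}[X]$, $S=\mathbb{Z}[t]$, $N=A_X(K_n)$. Everything else is the substitution $x_i\mapsto t$ collapsing $\prod_{i\in\mathcal I}(x_i+1)$ to $(t+1)^{|\mathcal I|}$.
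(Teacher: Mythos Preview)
Your proposal is correct and matches the paper's approach: the paper simply states that the corollary ``follows directly from Theorem~\ref{teo:criticalidealscompletegraph}'', and your specialization $x_i\mapsto t$ via the substitution homomorphism is exactly that direct derivation, carried out with more care than the paper itself provides.
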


Note that for the complete graph $K_n$ with $n$ vertices, it can be checked that $A(K_n)=D(K_n)$ and $\deg(K_n)=\tr(K_n)$, and hence $A^{\tr}(K_n)=D^{\deg}(K_n)=L(K_n)$ and $A^{\tr}_+(K_n)=D^{\deg}_+(K_n)=Q(K_n)$.
By product, we can obtain the SNF of $A^{\tr}(K_n)$, $D^{\deg}(K_n)$, $L(K_n)$, $A^{\tr}_+(K_n)$, $D^{\deg}_+(K_n)$ and $Q(K_n)$.

\begin{proposition}\label{prop:SNF_Kn}
    The Smith normal forms of $A^{\tr}(K_n)$, $D^{\deg}(K_n)$ and $L(K_n)$ are equal to $\diag(1,n,\dots,n,0)$.
    The Smith normal forms of $A^{\tr}_+(K_n)$, $D^{\deg}_+(K_n)$ and $Q(K_n)$ are equal to $\diag(1,n-2,\dots,n-2,2(n-1)(n-2))$. 
\end{proposition}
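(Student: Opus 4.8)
The plan is to exploit the identification, noted just before the statement, that for the complete graph $K_n$ we have $A^{\tr}(K_n)=D^{\deg}(K_n)=L(K_n)$ and $A^{\tr}_+(K_n)=D^{\deg}_+(K_n)=Q(K_n)$, so it suffices to compute the SNF of $L(K_n)$ and $Q(K_n)$. I would obtain both from the critical ideals of $K_n$ supplied by Theorem~\ref{teo:criticalidealscompletegraph} via Proposition~\ref{prop:evalmultiplevariables}: evaluate the $k$-th critical ideal $I_k(A_X(K_n))$ at the appropriate point to get the ideal generated by $\Delta_k$, and then use the elementary divisors theorem (Theorem~\ref{theo:edt}) to recover $f_k = \Delta_k/\Delta_{k-1}$.

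First I would handle $L(K_n)=A^{\tr}(K_n)$. Here $\deg(v)=\tr(v)=n-1$ for every vertex, so we evaluate at $X=(n-1,\dots,n-1)$, i.e. each $x_i+1 = n$. By Theorem~\ref{teo:criticalidealscompletegraph}, the reduced Gröbner basis $B_m$ of $I_m(A_X(K_n))$ consists of products of $m-1$ factors of the form $x_i+1$; evaluating, every generator becomes $n^{m-1}$, so $\Delta_m(L(K_n)) = \gcd$ of these $= n^{m-1}$ for $1\le m\le n-1$. For the top minor, the determinant formula gives $\det(A_X(K_n)) = \prod_i(x_i+1) - \sum_j\prod_{i\ne j}(x_i+1)$, which at our evaluation point is $n^n - n\cdot n^{n-1} = 0$, consistent with $L(K_n)$ being singular of rank $n-1$. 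Thus $f_1 = \Delta_1/\Delta_0 = n^0/1 = 1$, $f_k = n^{k-1}/n^{k-2} = n$ for $2\le k\le n-1$, and $f_n = 0$, giving $\diag(1,n,\dots,n,0)$ as claimed.

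Next I would do $Q(K_n)=A^{\tr}_+(K_n)$. Since $Q(G) = \deg(G)+A(G) = \diag(x_i)-A(G)$ evaluated at $x_i = -\deg(v_i)$, Proposition~\ref{prop:evalmultiplevariables} says we evaluate the critical ideals at $X=(-(n-1),\dots,-(n-1))$, so now $x_i+1 = -(n-2) = 2-n$. Each degree-$(m-1)$ generator in $B_m$ becomes $(2-n)^{m-1}$, so $\Delta_m(Q(K_n)) = |n-2|^{m-1}$ for $1\le m\le n-1$, whence $f_1 = 1$ and $f_k = n-2$ for $2\le k\le n-1$. For the last factor I would compute $\Delta_n = |\det(Q(K_n))|$: the determinant formula gives $(2-n)^n - n(2-n)^{n-1} = (2-n)^{n-1}\big((2-n)-n\big) = (2-n)^{n-1}(2-2n) = 2(n-1)(n-2)^{n-1}$ up to sign, so $f_n = \Delta_n/\Delta_{n-1} = 2(n-1)(n-2)^{n-1}/(n-2)^{n-2} = 2(n-1)(n-2)$. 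This yields $\diag(1,n-2,\dots,n-2,2(n-1)(n-2))$.

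The one genuine subtlety — not a hard obstacle, but the step to state carefully — is the divisibility check: I must confirm the resulting diagonal entries actually form a valid SNF chain, i.e. $1 \mid n$, $n \mid n$, and for $Q$ that $1 \mid (n-2)$, $(n-2)\mid(n-2)$, and $(n-2) \mid 2(n-1)(n-2)$, all of which are immediate; and I should note the small-$n$ edge cases ($n=2$, where $K_2=P_2$ and the ``$n-2$'' block is empty, and $n=3$ where the $Q$-block entries vanish — there $Q(K_3)$ has SNF $\diag(1,1,4)$, matching $2(n-1)(n-2)=4$ with an empty middle block). Everything else is a direct substitution into the cited Gröbner basis and determinant formulas, so I would present the argument as two short evaluations followed by the elementary divisors theorem.
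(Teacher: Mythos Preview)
Your proposal is correct and follows essentially the same approach as the paper: evaluate the determinantal ideals of $A_X(K_n)$ at the constant vectors $(n-1,\dots,n-1)$ and $(1-n,\dots,1-n)$ and read off the invariant factors via Theorem~\ref{theo:edt}, exactly as the paper does (the paper just phrases it through the single-variable corollary $I_m(A_t(K_n))=\langle (t+1)^{m-1}\rangle$ rather than the multivariate Theorem~\ref{teo:criticalidealscompletegraph}, which is equivalent here since the evaluation point is constant). One tiny slip to fix in your write-up: you wrote ``$Q(G)=\diag(x_i)-A(G)$ evaluated at $x_i=-\deg(v_i)$'', but that evaluation actually gives $-Q(G)$; the paper covers this with the remark that $\SNF(M)=\SNF(-M)$, and you should say the same.
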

\begin{proof}[\emph{Proof}]
    It follows by evaluating $I_m(A_t(K_n)$ at $t=n-1$ and $t=1-n$.
    Notice that the SNF of a matrix $M$ is equal to the SNF of $-M$.
    Therefore, by Proposition~\ref{prop:evalmultiplevariables}, we obtain the result.
\end{proof} 

Let $K_{m,1}$ be the star graph with $m\geq 1$.
For simplicity, let
\[
    D_X(K_{m,1})=
        \begin{bmatrix}
        \diag(x_1,\dots,x_m)-2{\sf I}_m+2{\sf J}_m & {\sf J}_{m,1}\\
        {\sf J}_{1,m} & y\\
        \end{bmatrix},
\]
where ${\sf I}_m$ and ${\sf J}_m$ denote the identity and the all ones matrices, respectively.
Then, the distance ideal of the star graphs are given by the following result.

\begin{theorem}\cite{at}
    Let
    \[
    C_k=\left\{ \prod\limits_{i\in \mathcal{I}}(x_i-2) : \mathcal{I}\subset [m] \text{ and } |\mathcal{I}| = k-1\right\}
    \]
    and
    \[
    D_k=\left\{ (2y-1)\prod\limits_{i\in \mathcal{I}}(x_i-2) : \mathcal{I}\subset [m] \text{ and } |\mathcal{I}| = k-2  \right\}.
    \]
    For $k \in [m]$, the $k$-th distance ideal of the star graph $K_{m,1}$ is generated by $\langle C_k \cup D_k\rangle\subseteq\mathbb{Z}[x_1,\dots,x_m,y]$.
    And, $\det(D_X(K_{m,1}))$ is equal to
    \begin{equation}
        y\prod\limits_{i=1}^m(x_i-2) + (2y-1)\sum\limits_{i=1}^m\prod\limits^m_{\substack{j=1\\ j\neq i}}(x_j-2)
    \end{equation}
\end{theorem}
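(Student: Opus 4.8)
The plan is to compute the distance ideals of $K_{m,1}$ directly from the matrix $D_X(K_{m,1})$ by a careful choice of row and column operations that expose the claimed generators, and then to argue that the resulting set is already a reduced Gr\"obner basis. First I would normalize the matrix: subtracting $2{\sf J}_m$ amounts to the identity $D(K_{m,1}) = 2({\sf J}_m-{\sf I}_m) + (\text{border for the center})$, so the top-left block is $\diag(x_i-2) + 2{\sf J}_m$, i.e.\ a diagonal matrix plus a rank-one perturbation. The key structural observation is that after subtracting row $m+1$ (the center row, which is $({\sf J}_{1,m}, y)$) from each of the first $m$ rows, the matrix becomes
\[
\begin{bmatrix}
\diag(x_1-2,\dots,x_m-2) & ({\sf J}_{m,1}-y{\sf J}_{m,1})\\
{\sf J}_{1,m} & y
\end{bmatrix}
=
\begin{bmatrix}
\diag(x_1-2,\dots,x_m-2) & (1-y){\sf J}_{m,1}\\
{\sf J}_{1,m} & y
\end{bmatrix},
\]
where the top-left has become genuinely diagonal and the only coupling left is through the last row and column. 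This is the decisive simplification: an arrow-shaped (bordered diagonal) matrix, for which minors are easy to enumerate.

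Next I would classify the $k\times k$ submatrices of this arrow matrix into two types: those avoiding the last row and column entirely, whose determinants are products $\prod_{i\in\mathcal I}(x_i-2)$ with $|\mathcal I|=k$; and those using the last row and the last column, which by a cofactor expansion along the border give, up to sign, $y\prod_{i\in\mathcal I}(x_i-2) - (1-y)\sum_{j\in\mathcal I}\prod_{i\in\mathcal I\setminus\{j\}}(x_i-2)$ for $|\mathcal I|=k-1$. (Submatrices using the last row but not the last column, or vice versa, contribute nothing new to the ideal after one more elementary reduction, since such a minor is divisible by one of the products already present.) The first type contributes exactly $C_{k+1}$ after reindexing $k\mapsto k-1$, i.e.\ the generators in $C_k$ of degree $k-1$. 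For the second type, I would show the ideal it generates, together with $C_k$, equals $\langle C_k\cup D_k\rangle$: modulo the products $\prod_{i\in\mathcal I\setminus\{j\}}(x_i-2)$ of size $k-2$ (which lie in $I_{k-1}\supseteq$ the relevant lower generators, or can be absorbed), the mixed minor reduces to $(2y-1)\prod_{i\in\mathcal J}(x_i-2)$ with $|\mathcal J|=k-2$, which is precisely $D_k$. The determinant formula for $k=m+1$ (there is no $D_{m+1}$ since $|\mathcal I|=m-1$ would need all variables plus reindexing; actually $\det$ is the full $k=m+1$ minor of the second type) then follows by specializing the cofactor computation to the full matrix, giving $y\prod_{i=1}^m(x_i-2) + (2y-1)\sum_{i=1}^m\prod_{j\neq i}(x_j-2)$ after collecting the $(1-y)$ and the lower-order products into the $(2y-1)$ coefficient.

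The main obstacle, and the step requiring the most care, is verifying that $C_k\cup D_k$ is actually a \emph{reduced} Gr\"obner basis with respect to the chosen monomial order, rather than merely a generating set. For this I would run Buchberger's criterion: the $S$-polynomials among elements of $C_k$ reduce to zero by the same combinatorial argument as in Theorem~\ref{teo:criticalidealscompletegraph} (this is essentially the complete-graph computation, since the $C_k$ are products of linear forms in distinct variables); the genuinely new $S$-polynomials are those pairing a $C_k$ generator with a $D_k$ generator, and those among $D_k$ generators. One must check that reducing an $S$-polynomial $S(p,q)$ with $p\in C_k$, $q\in D_k$ — which removes the shared monomial factor and leaves a difference of products times $(2y-1)$ or times a single $(x_i-2)$ — lands back in $\langle C_k\cup D_k\rangle$; the factor $(2y-1)$ is the crux, and one uses that $(2y-1)\cdot(x_i-2)$-multiples of smaller products already appear in $D_k$ (or that $(x_i-2)$-multiples appear in $C_k$) to close the reduction. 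I would also record that the leading terms are pairwise non-divisible across $C_k$ and $D_k$ (the $C_k$ leading monomials are squarefree in the $x_i$ of degree $k-1$; the $D_k$ leading monomials carry a $y$ and have $x$-degree $k-2$), which gives reducedness once Buchberger is confirmed. Finally, appealing to Proposition~\ref{prop:chaininclusionideals} and the elementary divisors theorem (Theorem~\ref{theo:edt}) would let one extract, as a sanity check, the SNF of $D(K_{m,1})$, $D^L$, $D^Q$, $D^{\deg}$ and $D^{\deg}_+$ by evaluating $x_i$ and $y$ appropriately, confirming consistency with the determinant formula.
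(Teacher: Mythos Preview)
The paper does not prove this theorem; it is quoted from \cite{at} and used as input for Proposition~\ref{prop:SNFdistancestar}. So there is no ``paper's own proof'' to compare against, and your proposal is an attempt to supply what the paper omits.

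Your overall strategy---reduce $D_X(K_{m,1})$ to a bordered-diagonal (arrow) matrix and then enumerate minors by whether they touch the border---is the right one, but the row operation you state is incorrect. Subtracting row $m+1$ once from each of the first $m$ rows does \emph{not} diagonalize the leaf block: the original block has diagonal entries $x_i$ and off-diagonal entries $2$, while row $m+1$ restricted to the first $m$ columns is all $1$'s, so a single subtraction leaves off-diagonal entries equal to $1$, not $0$. You must subtract $2\cdot(\text{row }m{+}1)$, which produces
\[
\begin{bmatrix}
\diag(x_1-2,\dots,x_m-2) & (1-2y){\sf J}_{m,1}\\
{\sf J}_{1,m} & y
\end{bmatrix},
\]
with $(1-2y)$ rather than $(1-y)$ in the last column. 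This is not cosmetic: the factor $2y-1$ in $D_k$ comes precisely from this entry, and with your stated reduction you would obtain the wrong generators. Once this is corrected, the minor classification goes through: minors using the last row but not the last column give exactly the elements of $C_k$; minors using both the last row and the last column with row and column index sets differing in one leaf give exactly $\pm(2y-1)\prod_{i\in\mathcal I}(x_i-2)$ with $|\mathcal I|=k-2$, i.e.\ the elements of $D_k$; all remaining minors lie in $\langle C_k\cup D_k\rangle$. The determinant formula then drops out from the full bordered-diagonal expansion.

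One further remark: the theorem as stated only asserts that $C_k\cup D_k$ \emph{generates} the $k$-th distance ideal, not that it is a reduced Gr\"obner basis. Your Buchberger discussion is therefore aiming at more than is required, and as written it is only a sketch (``I would run Buchberger's criterion'') rather than an argument; if you want to claim reducedness you will need to actually carry out the $S$-polynomial reductions, in particular for pairs in $D_k\times D_k$.
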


By evaluating the distance ideals at the degree vector and at minus the degree vector, we obtain the SNF of $D^{\deg}_+(K_{m,1})$ and $D^{\deg}(K_{m,1})$, respectively.

\begin{proposition}\label{prop:SNFdistancestar}
    The Smith normal form of $D^{\deg}_+(K_{m,1})$ is equal to
    \[
        \diag(1,\dots,1,2m(m-1)),
    \] and the Smith normal form of $D^{\deg}_+(K_{m,1})$ is equal to
    \[
    \begin{cases}
        \diag(1,3,\dots,3,2m(m-1)) & \text{if } 2m + 1 = 3a \text{ for some }a\in\mathbb{Z},\\
        \diag(1,1,3,\dots,3,6m(m-1)) & \text{otherwise.}
    \end{cases}
    \]
\end{proposition}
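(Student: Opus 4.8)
The plan is to follow the route already used for the complete graph: realize $D^{\deg}_+(K_{m,1})$ and $D^{\deg}(K_{m,1})$ as evaluations of the polynomial matrix $D_X(K_{m,1})$ and read off the invariant factors from the generating sets $C_k\cup D_k$ of the distance ideals of $K_{m,1}$ provided by the theorem of \cite{at} above. The $m$ leaves of $K_{m,1}$ have degree $1$ and the center has degree $m$, so the relevant evaluation point is the degree vector $(x_1,\dots,x_m,y)=(1,\dots,1,m)$. Since $D^{\deg}_+(K_{m,1})=\diag(1,\dots,1,m)+D(K_{m,1})$ equals $D_X(K_{m,1})$ evaluated at $(1,\dots,1,m)$, and $D^{\deg}(K_{m,1})=\diag(1,\dots,1,m)-D(K_{m,1})$ equals $-D_X(K_{m,1})$ evaluated at $(-1,\dots,-1,-m)$ — the Smith normal form being unchanged under negating the matrix — it suffices to evaluate the distance ideals at these two points. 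By the distance-ideal analogue of Proposition~\ref{prop:evalmultiplevariables}, the ideal $I_k(D_X(K_{m,1}))$ specialized at $X={\bf c}$ is generated by $\Delta_k$, the gcd of the $k$-minors of the corresponding integer matrix, and then Theorem~\ref{theo:edt} gives $f_k=\Delta_k/\Delta_{k-1}$. For $1\le k\le m$ we use the generators $C_k\cup D_k$, and for $k=m+1$ we use the determinant formula from the same theorem.

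For $D^{\deg}_+(K_{m,1})$ put $x_i=1$ and $y=m$. Then $x_i-2=-1$, so every product $\prod_{i\in\mathcal I}(x_i-2)$ is $\pm1$; in particular $C_k$ contains a unit for each $1\le k\le m$, hence $I_1,\dots,I_m$ are all trivial and $\Delta_1=\dots=\Delta_m=1$, that is $f_1=\dots=f_m=1$. Consequently $f_{m+1}=\Delta_{m+1}=\lvert\det D^{\deg}_+(K_{m,1})\rvert$, and substituting $x_i=1$, $y=m$ into the determinant formula gives $m(-1)^m+(2m-1)m(-1)^{m-1}=(-1)^{m-1}2m(m-1)$. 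Hence the Smith normal form is $\diag(1,\dots,1,2m(m-1))$.

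For $D^{\deg}(K_{m,1})$ put $x_i=-1$ and $y=-m$, so $x_i-2=-3$ and $2y-1=-(2m+1)$. Then for $2\le k\le m$ the set $C_k$ generates $\langle 3^{k-1}\rangle$ and $D_k$ generates $\langle(2m+1)3^{k-2}\rangle$, so the specialized ideal $I_k$ equals $3^{k-2}\langle 3,\,2m+1\rangle$; moreover $\Delta_1=1$, from the empty product in $C_1$. The value of $\langle 3,\,2m+1\rangle$ depends only on $\gcd(3,2m+1)$, which produces the two cases. If $2m+1=3a$ — equivalently $m\equiv 1\pmod 3$ — then $\Delta_k=3^{k-1}$ for $2\le k\le m$, so $f_1=1$ and $f_2=\dots=f_m=3$; the determinant formula evaluated at $x_i=-1$, $y=-m$ gives $\lvert\det D^{\deg}(K_{m,1})\rvert=3^{m-1}\,2m(m-1)$, hence $f_{m+1}=2m(m-1)$, and $3\mid 2m(m-1)$ because $m\equiv 1\pmod 3$, so the form is $\diag(1,3,\dots,3,2m(m-1))$. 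Otherwise $\langle 3,\,2m+1\rangle=\langle 1\rangle$, so $\Delta_k=3^{k-2}$ for $2\le k\le m$, giving $f_1=f_2=1$ and $f_3=\dots=f_m=3$, and $f_{m+1}=\lvert\det D^{\deg}(K_{m,1})\rvert/3^{m-2}=6m(m-1)$, so the form is $\diag(1,1,3,\dots,3,6m(m-1))$.

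The computation is essentially mechanical once the generating sets are specialized; the only place requiring care is the last paragraph. One must check that $\Delta_1=1$ in every case, so that the first invariant factor is $1$ rather than $3$, and, more to the point, verify that the asserted diagonal is genuinely a divisibility chain — the subtle point being that $3\mid 2m(m-1)$ holds exactly when $m\equiv 1\pmod 3$, which is precisely the dichotomy $3\mid 2m+1$ versus $3\nmid 2m+1$ used in the case split. As a sanity check in both cases, the product of the invariant factors should reproduce the determinant, namely $3^{m-1}2m(m-1)$ for $D^{\deg}$ and $2m(m-1)$ for $D^{\deg}_+$.
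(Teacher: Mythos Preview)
Your proof is correct and follows essentially the same approach as the paper's: specialize the distance ideals of $K_{m,1}$ at $\pm\deg(G)$, read off $\Delta_k$ from the generators $C_k\cup D_k$, and use the determinant formula for $\Delta_{m+1}$. One small slip in your closing commentary: the claim that ``$3\mid 2m(m-1)$ holds exactly when $m\equiv 1\pmod 3$'' is false (it also holds when $m\equiv 0$), but this does not affect the argument, since all you need in the $3\mid 2m+1$ case is the implication $m\equiv 1\Rightarrow 3\mid m-1$, which you state correctly.
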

\begin{proof}[\emph{Proof}]
    For $k\in[m]$, the evaluation of the $k$-\emph{th} distance ideals of $K_{m,1}$ at $(x_1,\dots,x_m,y)=\deg(G)$ yields the ideal $\langle1\rangle$.
    And the evaluation of $\det(D_X(K_{m,1}))$ at $(x_1,\dots,x_m,y)=\deg(G)$ is equal to $m(-1)^m+(2m-1)(-1)^{m-1}m$.
    Then the SNF of $D^{\deg}_+(K_{m,1})$ follows.
    On the other hand, for $k\in[m]$, the evaluation of the $k$-\emph{th} distance ideals of $K_{m,1}$ at $(x_1,\dots,x_m,y)=-\deg(G)$ is equal to $\langle 3^{k-2}\rangle$ or $\langle 3^{k-1}\rangle$ depending whether $2m+1$ is divisible by 3.
    And the evaluation of $\det(D_X(K_{m,1}))$ at $(x_1,\dots,x_m,y)=\deg(G)$ is equal to $m(-3)^m+(-2m-1)(-3)^{m-1}m$.
    From which the SNF of $D^{\deg}(K_{m,1})$ follows.
\end{proof}

Now, we will use some characterizations of critical ideals and distance ideals to prove that complete graphs are determined by the SNF of the matrices $A^{\tr}$, $A^{\tr}_+$, $D^{\deg}$ and $D^{\deg}_+$.

First note that as a consequence of Proposition~\ref{prop:evalmultiplevariables}, for any vector ${\bf d}\in \mathbb{Z}^{V(G)}$, the number of invariant factors equal to 1 of the matrix $M_X$ evaluated at $X={\bf d}$ is at least the number of trivial determinantal ideals of $M_X$.
Therefore, the family of graphs with at most $k$ trivial critical ideals contains the families of graphs whose matrices $A^{\tr}$ and $A_+^{tr}$ have at most $k$ invariant factors equal to 1, and the family of graphs with at most $k$ trivial distance ideals contains the families of graphs whose matrices $D^{\deg}$ and $D_+^{\deg}$ have at most $k$ invariant factors equal to 1.

\begin{theorem}\cite{alfaval}\label{teo:gamma1}
If $G$ is a simple connected graph, then $G$ has only one trivial critical ideal if and only if $G$ is the complete graph.
\end{theorem}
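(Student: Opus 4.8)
The plan is to prove both implications by directly analyzing only the first two critical ideals $I_1(A_X(G))$ and $I_2(A_X(G))$. Throughout I would assume $G$ is connected with $n\geq 2$ vertices. First I would record that $I_1(A_X(G))$ is always trivial: since $G$ is connected it has an edge $\{u,v\}$, so the off-diagonal entry $-a_{uv}=-1$ of $A_X(G)$ is one of the generators of $I_1$, giving $I_1(A_X(G))=\langle 1\rangle$. Hence $\gamma(A_X(G))\geq 1$ for every connected graph, and by the nesting of Proposition~\ref{prop:chaininclusionideals} the hypothesis ``$G$ has only one trivial critical ideal'' is equivalent to the single assertion that $I_2(A_X(G))$ is \emph{not} trivial.

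For the forward direction ($G=K_n\Rightarrow\gamma=1$) I would invoke Theorem~\ref{teo:criticalidealscompletegraph} with $m=2$: the reduced Gr\"obner basis of $I_2(A_X(K_n))$ is $B_2=\{x_i+1 : i\in[n]\}$. Every generator vanishes at the point $(-1,\dots,-1)$, so $1\notin I_2(A_X(K_n))$ and the ideal is proper. Equivalently, substituting $x_i=-1$ turns $A_X(K_n)$ into $-({\sf I}_n+A(K_n))=-{\sf J}_n$, a rank-one matrix all of whose $2$-minors vanish. Thus $I_2(A_X(K_n))$ is nontrivial while $I_1$ is trivial, and therefore $\gamma(K_n)=1$.

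The reverse direction is the core of the argument, and I would prove its contrapositive: if $G$ is connected but not complete, then $I_2(A_X(G))$ is trivial. The key structural fact is that such a $G$ must contain an induced $P_3$; concretely, there is a vertex $i$ with two neighbors $j,k$ that are non-adjacent (equivalently, some neighborhood fails to be a clique). Given this configuration I would exhibit a single $2$-minor of $A_X(G)$ equal to a unit: taking rows $\{i,j\}$ and columns $\{i,k\}$ gives
\[
\det\pmat{x_i & -a_{ik} \\ -a_{ji} & -a_{jk}} = x_i\cdot 0-(-1)(-1)=-1,
\]
using $a_{ik}=a_{ji}=1$ and $a_{jk}=0$. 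Hence $-1\in I_2(A_X(G))$, so $I_2(A_X(G))=\langle 1\rangle$ and $\gamma(A_X(G))\geq 2$, i.e.\ $G$ has at least two trivial critical ideals.

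The one step requiring care — and the place a hasty argument could slip — is the claim that a connected non-complete graph always contains an induced $P_3$. I would justify it by observing that a $P_3$-free graph is a disjoint union of cliques: if $i\sim j$ and $j\sim k$ with $i\not\sim k$, then $\{i,j,k\}$ induces a $P_3$, so the relation ``adjacent or equal'' is an equivalence relation whose classes are cliques; a connected graph of this form is a single clique, hence complete. Combined with the explicit unit minor above, this closes the reverse implication. I do not expect a genuine obstacle here: once the local $P_3$ is located, the minor computation is forced, and the only remaining subtlety is the degenerate base case $n=1$, where $K_1$ has no trivial critical ideal and should be excluded by the standing assumption $n\geq 2$.
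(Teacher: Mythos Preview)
The paper does not supply its own proof of this theorem: it is quoted from \cite{alfaval} and used as a black box, so there is no in-paper argument to compare against. Your proposal is a correct, self-contained proof. The forward direction is cleanly handled by the substitution $x_i=-1$, which turns $A_X(K_n)$ into $-{\sf J}_n$ and kills every $2$-minor (this also covers $n=2$, where Theorem~\ref{teo:criticalidealscompletegraph} with $m=2$ would not directly apply). The reverse direction---locating an induced $P_3$ in any connected non-complete graph and reading off the unit $2$-minor
\[
\det\begin{pmatrix} x_i & -1 \\ -1 & 0 \end{pmatrix}=-1
\]
from rows $\{i,j\}$ and columns $\{i,k\}$---is exactly the standard argument and there is no gap. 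Your remark excluding $n=1$ is also apt.
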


Then the complete graphs are the only connected with $n$ vertices whose matrices $A^{\tr}$ and $A^{\tr}_+$ could contain exactly one invariant factor equal to 1.
And by Proposition~\ref{prop:SNF_Kn}, complete graphs are the only connected graphs with $n$ vertices whose SNF of $A^{\tr}$ has the first invariant factor equal to 1 and the second invariant factor is equal to $n$, and similarly, complete graphs are the only connected graphs with $n$ vertices whose SNF of $A^{\tr}_+$ has the first invariant factor equal to 1 and the second invariant factor is equal to $n-2$.
This implies the following result.

\begin{theorem}
    Complete graphs are determined by the SNF of $A^{\tr}$.
    And complete graphs are determined by the SNF of $A^{\tr}_+$.
\end{theorem}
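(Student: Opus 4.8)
The plan is to combine Theorem~\ref{teo:gamma1} with Proposition~\ref{prop:SNF_Kn} in the way already sketched in the text, and to make the logical structure explicit. Suppose $H$ is a connected graph on $n$ vertices with $\SNF(A^{\tr}(H)) = \SNF(A^{\tr}(K_n))$; I want to conclude $H \cong K_n$. First I would observe that $A^{\tr}(H)$ is of the form $M_X(H)$ evaluated at $X = \tr(H)$, where $M_X(H) = \diag(x_1,\dots,x_n) - A(H)$ is the critical-ideal polynomial matrix $A_X(H)$ (up to the sign convention, which does not affect the SNF). By the consequence of Proposition~\ref{prop:evalmultiplevariables} noted just before Theorem~\ref{teo:gamma1}, the number of invariant factors of $A^{\tr}(H)$ equal to $1$ is at least the algebraic co-rank $\gamma(A_X(H))$, i.e. at least the number of trivial critical ideals of $H$.

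Next I would read off from $\SNF(A^{\tr}(K_n)) = \diag(1,n,\dots,n,0)$ (Proposition~\ref{prop:SNF_Kn}) that this matrix has exactly one invariant factor equal to $1$ (since $n \geq 2$). Hence if $H$ is $A^{\tr}$-coinvariant with $K_n$, then $A^{\tr}(H)$ also has exactly one invariant factor equal to $1$, so $\gamma(A_X(H)) \leq 1$; and since $H$ is connected on $n\ge 2$ vertices, $I_1(A_X(H)) = \langle 1\rangle$ always (the diagonal contains a degree which is a nonzero integer, or one can argue $1$-minors include off-diagonal $\pm 1$ entries), giving $\gamma(A_X(H)) = 1$. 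By Theorem~\ref{teo:gamma1}, a connected graph has exactly one trivial critical ideal iff it is complete, so $H = K_m$ for some $m$; comparing the size of the matrix forces $m = n$, so $H \cong K_n$. The argument for $A^{\tr}_+$ is identical: one evaluates $A_X(H)$ at $X = -\tr(H)$ instead (using that $\SNF(M) = \SNF(-M)$), and uses that $\SNF(A^{\tr}_+(K_n)) = \diag(1,n-2,\dots,n-2,2(n-1)(n-2))$ likewise has a single invariant factor equal to $1$ for $n\ge 3$ (the small cases $n\le 2$ being trivial to check directly).

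The only genuinely delicate point is the passage "exactly one invariant factor equal to $1$ $\Rightarrow$ $\gamma(A_X(H))\le 1$" together with "$\gamma(A_X(H))\ge 1$". The first direction is the inequality $(\#\text{ invariant factors}=1) \ge \gamma$ from Proposition~\ref{prop:evalmultiplevariables}; the second needs that $I_1$ is always trivial for a graph with at least one edge, which holds because $A(H)$ has a $\pm1$ entry, so $\minors_1(A_X(H))$ contains $1$ (after the sign normalization $-1$, whose ideal is still $\langle 1\rangle$). I would state this as a short remark. A secondary subtlety worth a sentence: one must make sure $\SNF(A^{\tr}(K_n))$ really has the second invariant factor strictly larger than $1$, i.e. $n > 1$ — true for every complete graph under consideration — so that "at most one trivial critical ideal" is actually forced rather than "at most $r$" for some $r\ge 2$. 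Modulo these remarks the proof is a two-line deduction from the cited results, so I expect no substantial obstacle; the work is purely in assembling the implications cleanly.
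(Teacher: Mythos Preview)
Your approach is essentially the same as the paper's: both combine Theorem~\ref{teo:gamma1}, Proposition~\ref{prop:SNF_Kn}, and the evaluation inequality from Proposition~\ref{prop:evalmultiplevariables} to force a coinvariant mate of $K_n$ to have exactly one trivial critical ideal and hence be complete. Your write-up is in fact more careful than the paper's in making the logic explicit (in particular the remark that $I_1(A_X(H))=\langle 1\rangle$ whenever $H$ has an edge).

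There is one small slip in the $A^{\tr}_+$ half. You assert that $\SNF(A^{\tr}_+(K_n))=\diag(1,n-2,\dots,n-2,2(n-1)(n-2))$ has a single invariant factor equal to $1$ for $n\ge 3$, but for $n=3$ the second entry is $n-2=1$, so the SNF is $\diag(1,1,4)$ with \emph{two} unit invariant factors, and your inequality only yields $\gamma(A_X(H))\le 2$. This does not invoke Theorem~\ref{teo:gamma1}. The fix is simply to enlarge your ``small cases'' to $n\le 3$ and check $n=3$ directly (the only other connected graph on three vertices is $P_3$, with $\SNF(A^{\tr}_+(P_3))=\diag(1,1,12)\neq\diag(1,1,4)$); for $n\ge 4$ one has $n-2\ge 2$ and your argument goes through verbatim.
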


We now show that complete graphs are determined by the SNF of $D^{\deg}$ and $D^{\deg}_+$.

\begin{theorem}\cite{at}\label{teo:clasificationofgraphswith1trivialdistance}
A connected graph has only one trivial distance ideal over $\mathbb{Z}[X]$ if and only if $G$ is either a complete graph or a complete bipartite graph.
\end{theorem}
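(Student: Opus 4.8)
The plan is to translate the triviality of the second distance ideal into a statement about rank-one evaluations over finite fields, and then classify. Assume $n\ge 2$ (for $n=1$ the statement is vacuous): then $I_1(D_X(G))=\langle 1\rangle$ automatically, since $G$ is connected and hence has an off-diagonal entry equal to $1$, so ``exactly one trivial distance ideal'' means precisely $I_2(D_X(G))\ne\langle 1\rangle$. Because $\mathbb{Z}[X]$ is a Jacobson ring whose maximal ideals have finite residue fields, $I_2(D_X(G))\ne\langle 1\rangle$ if and only if there are a prime $p$ and a point $\mathbf{a}\in\overline{\mathbb{F}_p}^{\,n}$ at which every $2\times 2$ minor of $D_X(G)$ vanishes, equivalently $\operatorname{rank}_{\overline{\mathbb{F}_p}}(\diag(\mathbf{a})+D(G))\le 1$. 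Connectedness forces this matrix to have no zero row or column (each vertex has a neighbour, giving an entry $\equiv 1\pmod p$), so a rank $\le 1$ factorization $\mathbf{u}\mathbf{v}^{\top}$ has all coordinates nonzero; symmetry then gives $\mathbf{u}=\mu\mathbf{v}$, and after rescaling by a square root of $\mu$ one gets a vector $\mathbf{w}\in(\overline{\mathbb{F}_p}^{\times})^{n}$ with $\diag(\mathbf{a})+D(G)=\mathbf{w}\mathbf{w}^{\top}$. The upshot is the reformulation: $I_2(D_X(G))\ne\langle 1\rangle$ if and only if there are a prime $p$ and nonzero $w_1,\dots,w_n\in\overline{\mathbb{F}_p}$ with $\dist(i,j)\equiv w_iw_j\pmod p$ for all $i\ne j$.

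For the ``if'' direction I would exhibit the data explicitly. For $K_n$ all off-diagonal distances are $1$, so the $2\times 2$ minors of $D_X(K_n)$ are $x_ix_j-1$, $x_i-1$, and $0$, all lying in the proper ideal $\langle x_1-1,\dots,x_n-1\rangle$ (so $w_i=1$ works over any $\overline{\mathbb{F}_p}$). For $K_{a,b}$ take $p=3$ and evaluate at $x_i=-1$ for all $i$: the integer matrix $D(K_{a,b})-I_n$ reduces mod $3$ to $\mathbf{w}\mathbf{w}^{\top}$, where $w_i$ is a fixed square root of $-1$ on one part of the bipartition and its negative on the other (within a part $w_iw_j=-1\equiv 2$, across parts $w_iw_j=1$, and $w_i^2=-1$ matches the diagonal). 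Hence every $2\times 2$ minor of $D_X(K_{a,b})$ is divisible by $3$ after this evaluation, so $I_2(D_X(K_{a,b}))\subseteq\langle 3,\,x_1+1,\dots,x_n+1\rangle\ne\langle 1\rangle$.

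For the ``only if'' direction I would start from the $w_i$ and first show $\operatorname{diam}(G)\le 2$: a shortest path $v_0v_1v_2v_3$ of length $3$ is induced, and $w_0w_1\equiv w_1w_2\equiv w_2w_3\equiv 1$ forces $w_3\equiv w_1$ and $w_0\equiv w_1^{-1}$, hence $w_0w_3\equiv 1$; but $\dist(v_0,v_3)=3$, forcing $p=2$, and then $\dist(v_0,v_2)=2\equiv 0$ contradicts $w_0w_2=w_0^2\ne 0$. If $\operatorname{diam}(G)=1$ then $G=K_n$. If $\operatorname{diam}(G)=2$, all off-diagonal distances lie in $\{1,2\}$, a distance-$2$ pair forces $p\ne 2$ (as $w_iw_j\ne 0$), and adjacency is $i\sim j\iff w_iw_j\equiv 1$. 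The crux is to show that $W=\{w_1,\dots,w_n\}$ has at most two elements: for distinct $a,b,c\in W$ the cross products $ab,ac,bc$ all lie in $\{\bar 1,\bar 2\}$, and computing $a^2=(ab)(ac)(bc)^{-1}$ and its analogues while considering the four cases according to how many of $ab,ac,bc$ equal $\bar 2$, one finds in each case that two of $a,b,c$ must coincide or that $p=2$. With $|W|\le 2$ (and $|W|=1$ impossible for a connected diameter-$2$ graph on $n\ge 2$ vertices), partition $V=V_a\cup V_b$ by $w$-value: within $V_a$ all pairs are adjacent iff $a^2\equiv 1$ and nonadjacent iff $a^2\equiv 2$, similarly for $V_b$, while the two classes are completely joined iff $ab\equiv 1$. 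Discarding the combinations that yield $K_n$ (diameter $1$) or a disconnected graph---each ruled out using $p\ne 2$---the only remaining possibility is that $V_a$ and $V_b$ are both independent sets that are completely joined, i.e.\ $G=K_{|V_a|,|V_b|}$.

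The main obstacle is the diameter-$2$ analysis: proving $|W|\le 2$ and then identifying the surviving configuration requires carefully tracking the prime $p$, since the complete-bipartite witnesses genuinely live in characteristic $3$ rather than over $\mathbb{Q}$, and it requires separately handling the degenerate case in which a colour class is a single vertex, which is exactly what produces the stars $K_{1,m}$. By contrast, the reduction in the first paragraph and the ``if'' direction are routine once the Nullstellensatz bookkeeping over $\mathbb{Z}[X]$ is in place.
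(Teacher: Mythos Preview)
The paper does not prove this theorem; it is quoted from \cite{at} (Alfaro--Taylor) and used as a black box in the subsequent argument that complete graphs are determined by the SNF of $D^{\deg}$ and $D^{\deg}_+$. So there is no proof in the present paper to compare against.

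That said, your argument is sound and self-contained. The reduction via the Jacobson property of $\mathbb{Z}[X]$ to a rank-one factorization $\diag(\mathbf a)+D(G)=\mathbf w\mathbf w^{\top}$ over some $\overline{\mathbb{F}_p}$ is correct (algebraic closedness supplies the square root of the scalar $\mu$), and the reformulation $\dist(i,j)\equiv w_iw_j$ for $i\ne j$ is exactly the right combinatorial handle. Your diameter bound is clean: an induced geodesic $P_4$ forces $p=2$, and then the distance-$2$ entry $w_0w_2\ne 0$ is the contradiction. In the diameter-$2$ analysis your case split works, though the step $|W|\le 2$ is in fact easier than you suggest: for distinct $a,b,c\in W$ one has $ab\ne ac$ (else $b=c$), and symmetrically $ab\ne bc$ and $ac\ne bc$, so three pairwise distinct elements of $\{\bar 1,\bar 2\}$ is already impossible---no computation of $a^2$ is needed. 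The endgame with $W=\{a,b\}$ is as you say: $ab=2$ disconnects $G$; $ab=1$ with $a^2=1$ forces $a=b$; and the surviving option $a^2=b^2=2$, $ab=1$ gives $K_{|V_a|,|V_b|}$ (and incidentally pins $p=3$, matching your witness in the ``if'' direction). Handling $|V_b|=1$ separately to recover the stars $K_{1,m}$ is necessary and you flag it. The only cosmetic suggestion is to state explicitly at the outset of the diameter-$2$ case that $p\ne 2$, so that later divisions by $2$ and the inequality $\bar 1\ne\bar 2$ are unambiguous.
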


We saw in Proposition~\ref{prop:SNF_Kn} that the SNF of $D^{\deg}(K_n)$ is equal to $\diag(1,n,\dots,n,0)$, and the SNF of $D^{\deg}_+(K_n)$ is equal to $\diag(1,n-2,\dots,n-2,2(n-1)(n-2))$.
On the other hand, by Proposition ~\ref{prop:SNFdistancestar}, the SNF of $D^{\deg}_+(K_m,1)= \diag(1,3,\dots,3,2m(m-1))$ when $2m + 1 $ is divisible by $3$, and this is the only case when the SNF of the matrices $D^{\deg}_+$ and $D^{\deg}$ of the star graph has only one invariant factor equal to one.
Let us analyze second distance ideal of the complete bipartite graph. 
For this, let 
\[
    D_{X,Y}(K_{m,n})=
    \begin{bmatrix}
        \diag(x_1,\dots,x_m)-2{\sf I}_m+2{\sf J}_m & {\sf J}_{m,n}\\
        {\sf J}_{n,m} & \diag(y_1,\dots,y_n)-2{\sf I}_n+2{\sf J}_n\\
    \end{bmatrix}.
\]
Since we are interested in the cases different from the star graphs, we have that if $m\geq2$ and $n\geq2$, then the second distance ideal of the complete is equal to
\[
\langle x_1-2, \dots, x_m-2, y_1-2, \dots, y_n-2,3\rangle.
\]
From which follows that the second invariant factor of $D^{\deg}_+(K_{m,n})$ is equal to 3 when $m-2$ and $n-2$ are divisible by 3, and the the second invariant factor of $D^{\deg}(K_{m,n})$ is equal to 3 when $m+2$ and $n+2$ are divisible by 3.
Since the only case when the second invariant factor of $D^{\deg}(K_n)$ is 3 is when $n=3$, and the second invariant factor of of $D^{\deg}$ of any complete bipartite with 3 vertices is different from 3, then we have the following result.

\begin{theorem}
    Complete graphs are determined by the SNF of $D^{\deg}$.
\end{theorem}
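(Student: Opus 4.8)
The plan is to combine the classification of graphs with a single trivial distance ideal (Theorem~\ref{teo:clasificationofgraphswith1trivialdistance}) with the explicit invariant factors already recorded above, in the same spirit as the discussion preceding the statement.

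Fix $n\geq 2$ and let $H$ be a connected graph with $\SNF(D^{\deg}(H))=\SNF(D^{\deg}(K_n))=\diag(1,n,\dots,n,0)$, the last equality being Proposition~\ref{prop:SNF_Kn}. Reading off this form, $D^{\deg}(K_n)$ has exactly one invariant factor equal to $1$ for every $n\geq 2$. As observed before Theorem~\ref{teo:gamma1}, the number of trivial distance ideals of $H$ is at most the number of invariant factors of $D^{\deg}(H)$ equal to $1$; hence $H$ has at most one trivial distance ideal, and since $I_1(D_X(H))$ is always trivial (the distance matrix of a connected graph on at least two vertices has an off-diagonal entry equal to $1$), it has exactly one. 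By Theorem~\ref{teo:clasificationofgraphswith1trivialdistance}, $H$ is either a complete graph or a complete bipartite graph.

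If $H=K_m$, then Proposition~\ref{prop:SNF_Kn} gives $\SNF(D^{\deg}(K_m))=\diag(1,m,\dots,m,0)$, and comparing this with $\diag(1,n,\dots,n,0)$ as matrices of the same order forces $m=n$, so $H\cong K_n$. It remains to rule out every complete bipartite graph that is not complete, i.e. every $K_{p,q}$ with $1\leq p\leq q$ and $q\geq 2$. For $p\geq 2$ the second distance ideal of $K_{p,q}$ equals $\langle x_1-2,\dots,x_p-2,y_1-2,\dots,y_q-2,3\rangle$, which on evaluation at minus the degree vector becomes $\langle p+2,\,q+2,\,3\rangle=\langle\gcd(p+2,q+2,3)\rangle$, so $f_2(D^{\deg}(K_{p,q}))\in\{1,3\}$. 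For the star $H=K_{m,1}$ with $m\geq 2$, Proposition~\ref{prop:SNFdistancestar} shows $\SNF(D^{\deg}(K_{m,1}))$ is either $\diag(1,3,\dots,3,2m(m-1))$ or $\diag(1,1,3,\dots,3,6m(m-1))$; in both cases $f_2\in\{1,3\}$ and the last invariant factor is nonzero. So every non-complete complete bipartite graph $H$ satisfies $f_2(D^{\deg}(H))\leq 3$.

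To conclude, I would compare second invariant factors. If $n\geq 4$ then $f_2(D^{\deg}(K_n))=n>3$, so no non-complete complete bipartite graph is coinvariant with $K_n$. For $n=2$ the only complete bipartite graph on two vertices is $K_2$ itself, and for $n=3$ the only non-complete complete bipartite graph on three vertices is $K_{2,1}=P_3$, whose $D^{\deg}$ has SNF $\diag(1,1,12)\neq\diag(1,3,0)$; so these small orders are settled by inspection. Together with the complete-graph case and the fact that distinct complete graphs trivially have $\SNF(D^{\deg})$ of different orders, this proves that $K_n$ is determined by the SNF of $D^{\deg}$ for every $n$. I expect the only delicate point to be bookkeeping rather than conceptual: applying the inequality ``trivial distance ideals $\leq$ invariant factors equal to $1$'' in the correct direction, and not overlooking the small orders $n\in\{2,3\}$ where the generic separation ``$f_2=n$ versus $f_2\leq 3$'' degenerates.
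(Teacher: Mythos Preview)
Your argument is correct and follows essentially the same route as the paper: reduce via Theorem~\ref{teo:clasificationofgraphswith1trivialdistance} to complete and complete bipartite graphs, then separate them by the second invariant factor (which is $n$ for $K_n$ but lies in $\{1,3\}$ for every non-complete $K_{p,q}$). Your treatment is in fact more explicit than the paper's---you handle the star case via Proposition~\ref{prop:SNFdistancestar} separately, verify $P_3$ directly, and isolate the degenerate orders $n\in\{2,3\}$---whereas the paper compresses all of this into the single sentence preceding the theorem.
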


On the other hand, the only case when the second invariant factor of $D^{\deg}_+(K_n)$ is 3 is when $n=5$, and the second invariant factor of $D^{\deg}_+$ of any complete bipartite with 5 vertices is different from 3, then we have the following result.

\begin{theorem}
    Complete graphs are determined by the SNF of $D^{\deg}_+$.
\end{theorem}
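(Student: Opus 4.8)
The plan is to show that every connected graph $G$ that is $D^{\deg}_+$-coinvariant with a complete graph $K_n$ is isomorphic to $K_n$. Since the SNF of $D^{\deg}_+(G)$ is an $n\times n$ diagonal matrix, such a $G$ has $n$ vertices, so I would fix $n$ and assume $G\in\mathcal{G}_n$ has the same SNF as $D^{\deg}_+(K_n)$, which by Proposition~\ref{prop:SNF_Kn} equals $\diag(1,n-2,\dots,n-2,2(n-1)(n-2))$. For $n\geq 4$ this form has exactly one invariant factor equal to $1$. Using the inequality recorded after Proposition~\ref{prop:evalmultiplevariables} --- the number of invariant factors of $D^{\deg}_+(G)$ equal to $1$ is at least the number of trivial distance ideals of $D_X(G)$ --- it follows that $G$ has at most one trivial distance ideal; being connected with at least two vertices it has at least one, hence exactly one. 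Theorem~\ref{teo:clasificationofgraphswith1trivialdistance} then forces $G$ to be a complete graph or a complete bipartite graph, and in the complete case $G\cong K_n$ and we are done. So the task reduces to showing that no complete bipartite graph $K_{a,b}$ with $a+b=n$ and $a,b\geq 1$ is $D^{\deg}_+$-coinvariant with $K_n$, which I would do by comparing second invariant factors: that of $D^{\deg}_+(K_n)$ is $n-2$.

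If $a=1$, then by Proposition~\ref{prop:SNFdistancestar} the SNF of $D^{\deg}_+(K_{1,n-1})$ is $\diag(1,\dots,1,2(n-1)(n-2))$, so its second invariant factor is $1$; for $n\geq 4$ this differs from $n-2\geq 2$, a contradiction. If $a,b\geq 2$, I would invoke the description of the second distance ideal of $K_{a,b}$ recalled just before the statement, namely $\langle x_1-2,\dots,x_a-2,y_1-2,\dots,y_b-2,3\rangle$. In $K_{a,b}$ the vertices on the side of size $a$ have degree $b$ and those on the side of size $b$ have degree $a$, so evaluating this ideal at $X=\deg(K_{a,b})$ (by Proposition~\ref{prop:evalmultiplevariables}, in the distance-ideal version used above for $D^{\deg}_+$) shows that the second invariant factor of $D^{\deg}_+(K_{a,b})$ equals $\gcd(a-2,b-2,3)\in\{1,3\}$. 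For coinvariance with $K_n$ this must equal $n-2$, and since $a,b\geq 2$ gives $n\geq 4$ the only possibility is $n-2=3$, i.e.\ $n=5$, forcing $\{a,b\}=\{2,3\}$; but then $\gcd(a-2,b-2,3)=\gcd(0,1,3)=1\neq 3$, a contradiction. This settles every non-complete $G$ for $n\geq 4$, and the handful of connected graphs on at most three vertices would be checked directly.

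The step I expect to be the main obstacle is precisely this small-$n$ bookkeeping rather than the generic range. For $n\geq 6$ the comparison of second invariant factors leaves ample room --- $n-2\geq 4$ against a value that is always $1$ or $3$ in the complete bipartite cases --- so the argument essentially runs itself once the classification step has been applied. For $n\in\{3,4,5\}$ this slack vanishes and one must extract the exact invariant factors from Propositions~\ref{prop:SNF_Kn} and~\ref{prop:SNFdistancestar} and the second-distance-ideal formula and check them against every complete bipartite graph on the relevant number of vertices; the case $n=5$ is the most delicate, since there $n-2=3$ collides with an attainable bipartite value and one really has to compute that $D^{\deg}_+(K_{2,3})$ does not match $D^{\deg}_+(K_5)$. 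I would carry out these small cases by hand at the end, as they are the only place where a coincidence could sneak in.
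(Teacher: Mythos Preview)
Your strategy coincides with the paper's: invoke Theorem~\ref{teo:clasificationofgraphswith1trivialdistance} to reduce to complete and complete bipartite graphs, then separate them by comparing second invariant factors. The paper is terser---it singles out $n=5$ as the only $n$ with $n-2=3$ and checks that no complete bipartite graph on five vertices has second invariant factor~$3$---but the logic is the same, and for $n\ge 4$ your write-up is simply a more explicit version of the paper's argument.

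The deferred step, however, will not go through. At $n=3$, Proposition~\ref{prop:SNF_Kn} gives $\SNF\bigl(D^{\deg}_+(K_3)\bigr)=\diag(1,1,4)$, while Proposition~\ref{prop:SNFdistancestar} with $m=2$ gives $\SNF\bigl(D^{\deg}_+(K_{2,1})\bigr)=\diag(1,1,2\cdot 2\cdot 1)=\diag(1,1,4)$ as well; thus $K_3$ and the path $P_3=K_{2,1}$ are $D^{\deg}_+$-coinvariant but not isomorphic. The paper's argument does not address this case either (it only rules out the collision at $n=5$), so the statement as written appears to need the hypothesis $n\ge 4$. Your proof for $n\ge 4$ is complete and matches the paper; at $n=3$ the ``coincidence'' you worried might ``sneak in'' actually does.
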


\section{The Spectrum of $A^{\tr}$ and $D^{\deg}$}
\label{sec:spectrum}

For any real symmetric $n\times n$ matrix $M$, we write  
\[
\lambda_1(M) \leq \lambda_2(M) \leq \cdots  \leq \lambda_n(M)
\]
to denote the $n$ real eigenvalues of $M$. 
The Courant-Fisher theorem gives variational characterizations of the eigenvalues of $M$ as
\[
\lambda_i(M) \ = \min_{\dim(U)=i}\ \max_{\x\in U : \|\x\| = 1} \x^T M \x
\quad = \max_{\dim(U)=n-i-1}\ \min_{\x\in U:\|\x\| = 1} \x^T M \x
\]
for $i=1,\dots,n$, where $U$ ranges over all subspaces of $\mathbb{R}^n$.  
In particular, the smallest eigenvalue $\lambda_1(M)$ is given by 
\[
\lambda_1(M) \ = \  \min_{\|\x\| = 1} \x^T M \x
\]
and the largest $\lambda_n(M)$ by
\[
\lambda_n(M) \ = \ \max_{\|\x\| = 1} \x^T M \x.
\]

Let $\de(G)$ and $\De(G)$ denote the smallest and largest degrees of the vertices of a graph $G$, respectively. 
Similarly, let $\th(G)$ and $\Th(G)$ respectively denote the smallest and largest transmissions of the vertices of $G$. 
Recall that $A^{\tr} = \tr(G) - A$ and $D^{\deg} = \deg(G) - D$, where $\tr(G)$ and $\deg(G)$ are the diagonal matrices of transmissions and degrees, respectively.

\begin{theorem}\label{lem:bounds}
    If $G$ is a connected graph with $n$ vertices, then  
    \begin{align*}
            \la_1(A^{\tr}) \ &\geq \ \th(G) - \la_n(A),\quad & \la_1(D^{\deg}) \ &\geq \ \de(G) - \la_n(D),\\
            \la_n(A^{\tr}) \ &\leq \ \Th(G) - \la_1(A),\quad &
            \la_n(D^{\deg}) \ &\leq \ \De(G) - \la_1(D).
    \end{align*}
\end{theorem}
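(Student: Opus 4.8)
The plan is to apply the Courant--Fisher variational characterization together with the standard eigenvalue inequality for sums of symmetric matrices. Observe that all four claimed bounds have the same shape: the extreme eigenvalue of a difference (or sum) of a diagonal matrix and a symmetric matrix is controlled by the corresponding extreme eigenvalues of the two summands. So I would first isolate the general principle and then instantiate it four times.

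First I would recall (or prove in one line from Courant--Fisher) the following: for real symmetric $n\times n$ matrices $P$ and $R$, one has $\la_1(P+R)\geq \la_1(P)+\la_1(R)$ and $\la_n(P+R)\leq \la_n(P)+\la_n(R)$. The proof of the first inequality is immediate: for a unit vector $\x$ achieving $\la_1(P+R)=\x^T(P+R)\x$, we get $\x^T(P+R)\x = \x^TP\x+\x^TR\x \geq \la_1(P)+\la_1(R)$; the second is dual. Next I would apply this with $P=\tr(G)$ and $R=-A(G)$. Since $\tr(G)$ is the diagonal matrix of transmissions, its smallest eigenvalue is $\th(G)$ and its largest is $\Th(G)$. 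Also $\la_1(-A)=-\la_n(A)$ and $\la_n(-A)=-\la_1(A)$. Hence $\la_1(A^{\tr})=\la_1(\tr(G)-A)\geq \th(G)-\la_n(A)$ and $\la_n(A^{\tr})\leq \Th(G)-\la_1(A)$, which are the two bounds in the left column. The two bounds in the right column follow identically, taking $P=\deg(G)$ (whose eigenvalues are the degrees, so $\la_1=\de(G)$, $\la_n=\De(G)$) and $R=-D(G)$.

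There is essentially no obstacle here; the only point requiring a word of care is the elementary fact that a diagonal matrix has its diagonal entries as eigenvalues, so that $\la_1(\tr(G))=\th(G)$, $\la_n(\tr(G))=\Th(G)$, $\la_1(\deg(G))=\de(G)$, $\la_n(\deg(G))=\De(G)$, and the sign-flip identities $\la_i(-M)=-\la_{n+1-i}(M)$. Both are standard. If one prefers to avoid even citing the sum-of-eigenvalues inequality, one can argue directly: for a unit eigenvector $\x$ of $A^{\tr}$ with eigenvalue $\la_1(A^{\tr})$, write $\la_1(A^{\tr})=\x^T\tr(G)\x-\x^TA\x\geq \th(G)-\la_n(A)$, using $\x^T\tr(G)\x\geq \th(G)\|\x\|^2=\th(G)$ and $\x^TA\x\leq \la_n(A)$; the remaining three inequalities are obtained by the same one-line estimate with the roles of min/max and the signs interchanged.
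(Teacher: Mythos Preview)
Your proposal is correct and follows essentially the same approach as the paper: both isolate the general inequality $\la_1(M-N)\geq \la_1(M)-\la_n(N)$ (respectively $\la_n(M-N)\leq \la_n(M)-\la_1(N)$) via the Rayleigh-quotient formulation of Courant--Fisher, and then instantiate it with the diagonal matrices $\tr(G)$, $\deg(G)$ and the matrices $A$, $D$. Your phrasing in terms of $P+R$ with $R=-A$ and the sign-flip identities $\la_i(-M)=-\la_{n+1-i}(M)$ is a cosmetic variant of the paper's direct $M-N$ computation; the underlying argument is identical.
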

\begin{proof}[\emph{Proof}]
Let $M$ and $N$ be two real symmetric $n\times n$ matrices. 
If $\x \neq 0$ is a unit vector in $\re^{n}$, then 
\[
\x^T (M - N) \x \ =\  \x^T M \x - \x^T N \x
\]
and so
\begin{align*}
\la_1(M-N)\ = \ \min_{\|\x\| = 1 } \x^T (M - N) \x \ &\geq \ \min_{\|\x\| = 1 } \x^T M \x  -  \max_{\|\x\| = 1 } \x^T N \x \ = \ \la_1(M) - \la_n(N),\\
\la_n(M-N)\ = \ \max_{\|\x\| = 1 } \x^T (M - N) \x \ &\leq \ \max_{\|\x\| = 1 } \x^T M \x  -  \min_{\|\x\| = 1 } \x^T N \x \ = \ \la_n(M) - \la_1(N).
\end{align*}
Since $\la_1(\tr(G)) = \th(G)$, $\la_n(\tr(G)) = \Th(G)$, $\la_1(\deg(G)) = \de(G)$  and $\la_n(\deg(G)) = \De(G)$, the result follows..
\end{proof}

We have the following sufficient condition for the spectrum of the degree-distance matrix and the spectrum of the distance matrix to be equal up to a constant. 

\begin{lemma}\label{thm:sameDegreeSpectra}
Let $G$ be a graph with $n$ vertices such that every vertex of $G$ has the same degree $k$. Then $\lambda_i(D^{\deg}) =  k - \lambda_i(D)$ for all $i = 1,\dots,n$.
\end{lemma}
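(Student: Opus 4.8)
The plan is to reduce the statement to the elementary fact that an affine shift of a symmetric matrix shifts its spectrum in the obvious way. First I would observe that $k$-regularity means every diagonal entry of $\deg(G)$ equals $k$, i.e.\ $\deg(G) = kI_n$. Substituting into the definition gives
\[
D^{\deg}(G) \;=\; \deg(G) - D(G) \;=\; kI_n - D(G).
\]

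Next I would use that $D(G)$ is a real symmetric matrix, hence admits an orthonormal eigenbasis $v_1,\dots,v_n$ with $D(G)\,v_j = \mu_j v_j$. Each $v_j$ is then also an eigenvector of $kI_n - D(G)$, with $\big(kI_n - D(G)\big)v_j = (k-\mu_j)v_j$. Thus $D^{\deg}(G)$ is diagonalized by the same basis and its spectrum, as a multiset, is exactly $\{\,k-\mu : \mu \in \mathrm{spec}(D(G))\,\}$. Equivalently, using the standard identities $\lambda_i(cI_n + M) = c + \lambda_i(M)$ and $\lambda_i(-M) = -\lambda_{n+1-i}(M)$ for the increasing ordering $\lambda_1 \le \cdots \le \lambda_n$, one obtains $\lambda_i\big(D^{\deg}(G)\big) = k - \lambda_{n+1-i}(D(G))$; so the claimed identity $\lambda_i(D^{\deg}) = k - \lambda_i(D)$ holds once the eigenvalues of $D(G)$ are listed in the reverse order, and in any case the two spectra coincide up to the additive constant $k$ and a relabeling.

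There is essentially no obstacle here: the only point deserving a word of care is the order reversal coming from the minus sign, and one should state explicitly which indexing convention makes the displayed equality literally true (or phrase it as an equality of multisets, which is what is actually used afterwards). As a consistency check one may compare with Theorem~\ref{lem:bounds}: for a $k$-regular graph $\de(G) = \De(G) = k$, so that theorem already forces $\la_1(D^{\deg}) \ge k - \la_n(D)$ and $\la_n(D^{\deg}) \le k - \la_1(D)$, and the argument above upgrades these extremal inequalities to equalities for every eigenvalue.
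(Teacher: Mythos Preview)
Your argument is exactly the paper's: observe $\deg(G)=kI_n$, hence $D^{\deg}=kI_n-D$, and read off the spectrum from shared eigenvectors. Your added remark about the order reversal under $\mu\mapsto k-\mu$ is in fact more careful than the paper's own proof, which writes $\lambda_i(D^{\deg})=k-\lambda_i(D)$ without commenting on the index flip implied by the increasing ordering fixed at the start of the section.
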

\begin{proof}[\emph{Proof}]
If $\x$ is an eigenvector of $M$ with eigenvalue $\lambda$, then  $\x$ is an eigenvector of $cI - M$ with eigenvalue $c - \lambda$ for any $c \in \re$. 
So if all the vertices of $G$ have the same degree $k$, we have $\deg(G) = k I$ and hence $D^{\deg}(G) = kI - D(G)$. 
Thus $\lambda_i(D^{\deg}) =  k - \lambda_i(D)$.
\end{proof}

Similarly, we have the following sufficient condition for the spectrum of the transmission-adjacency matrix and the spectrum of the adjacency matrix to be equal up to a constant.

\begin{lemma}\label{thm:sameTransmissionSpectra}
Let $G$ be a graph with $n$ vertices such that every vertex of $G$ has the same transmission $r$. 
Then  $\lambda_i(A^{\tr}) =  r - \lambda_i(A)$ for all $i = 1,\dots,n$.
\end{lemma}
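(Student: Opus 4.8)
The plan is to mirror, almost verbatim, the argument used for Lemma~\ref{thm:sameDegreeSpectra}, since $A^{\tr}$ plays the same role relative to $A$ that $D^{\deg}$ plays relative to $D$. First I would observe that the hypothesis ``every vertex of $G$ has transmission $r$'' is exactly the statement that $\tr(G)$ is the scalar matrix $r{\sf I}_n$. Substituting this into the definition $A^{\tr}(G) = \tr(G) - A(G)$ gives
\[
A^{\tr}(G) \ = \ r{\sf I}_n - A(G).
\]

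Next I would invoke the elementary spectral fact already used in the proof of Lemma~\ref{thm:sameDegreeSpectra}: if $\x$ is an eigenvector of a symmetric matrix $M$ with eigenvalue $\lambda$, then $\x$ is an eigenvector of $c{\sf I}_n - M$ with eigenvalue $c - \lambda$ for every $c \in \re$. Applying this with $M = A(G)$ and $c = r$ shows that every eigenvector of $A(G)$ is an eigenvector of $A^{\tr}(G)$, and that $\lambda \mapsto r - \lambda$ is a multiplicity-preserving bijection between the spectrum of $A(G)$ and that of $A^{\tr}(G)$. Since $A(G)$ is symmetric it admits a full orthonormal eigenbasis, so this accounts for all $n$ eigenvalues of $A^{\tr}(G)$; hence, as multisets, $\{\lambda_i(A^{\tr})\}_{i=1}^{n} = \{r - \lambda_i(A)\}_{i=1}^{n}$, which is the asserted identity.

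The only point needing a word of care — and the ``hard part'', such as it is — is the ordering convention $\lambda_1 \le \cdots \le \lambda_n$: subtracting from $r$ reverses the order of the eigenvalues, so read strictly index-by-index the identity is $\lambda_i(A^{\tr}) = r - \lambda_{n+1-i}(A)$; as an equality of spectra with multiplicity it is precisely $\lambda_i(A^{\tr}) = r - \lambda_i(A)$ in the intended sense, consistently with the statement of Lemma~\ref{thm:sameDegreeSpectra}. I do not expect any genuine obstacle, since the content reduces to the one-line matrix identity above together with the shift-of-spectrum principle. It is worth remarking that the hypothesis holds for every transmission-regular graph, in particular for vertex-transitive and for distance-regular graphs; this is exactly what makes the lemma the bridge to the later assertion that the $A^{\tr}$-spectrum of a distance-regular graph is determined by its adjacency spectrum.
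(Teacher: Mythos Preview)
Your proposal is correct and follows essentially the same approach as the paper: observe that the hypothesis forces $\tr(G)=rI_n$, rewrite $A^{\tr}(G)=rI_n-A(G)$, and apply the spectral shift principle. Your remark about the index reversal under the ordering convention is a valid clarification that the paper itself omits.
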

\begin{proof}[\emph{Proof}]
If all the vertices of $G$ have the same transmission $r$, then $\tr(G) = r I$ and so $A^{\tr}(G) = r I - A(G)$. Hence,  an eigenvector of $A(G)$ with eigenvalue $\lambda_i(A)$ is an eigenvector of $A^{\tr}(G)$ with eigenvalue $r - \lambda_i(A)$.
\end{proof}

We now define a class of graphs for which the spectrum of $A^{\tr}$ is determined by the spectrum of the adjacency matrix, and the spectrum of $D^{\deg}$ is determined by the spectrum of the distance matrix. 
A graph $G$ of diameter $d$ is \emph{distance-regular} with intersection array $\{b_0,b_1,\dots, b_{d-1};c_1,c_2,\dots,c_d\}$ if $G$ is regular of degree $k = b_0$, and if for any two vertices $u$ and $v$ at distance $i$, there are precisely $c_i$ neighbors of $v$ at distance $i - 1$ from $u$, and $b_i$ neighbors of $v$ at distance $i+1$ from $u$. 
It is well known that two distance-regular graphs are cospectral with respect to the adjacency matrix if and only if they have the same intersection array (see e.g. \cite{brouwer1989}). 
Since the only vertex at distance 0 from a given vertex is itself,  we have that $c_1 = 1$.
The five Platonic solids are simple examples of distance-regular graphs: the tetrahedron has intersection array $\{3;1\}$, the octahedron $\{4,1; 1,4\}$, the cube $\{3,2,1; 1,2,3\}$, the icosahedron $\{5,2,1; 1,2,5\}$ and the dodecahedron $\{3,2,1,1,1; 1,1,1,2,3\}$.
Also, every strongly regular graph with parameters $(n,k,a,c)$ is a distance-regular graph with diameter at most 2 and  intersection array $\{k, k-a-1; 1, c\}$. 

The spectrum of the distance matrix of distance-regular graphs has been thoroughly studied, see for instance  \cite{drg-dsp, dsp-drg, dsp}. 
Since distance-regular graphs are \emph{transmission-regular} (i.e., each vertex has the same transmission), we have the following consequence of Lemma~\ref{thm:sameDegreeSpectra} and Lemma~\ref{thm:sameTransmissionSpectra}. 

\begin{corollary}
For distance-regular graphs, the spectrum of $D^{\deg}$ is determined by the spectrum of $D$ and the spectrum of $A^{\tr}$ is determined by the spectrum of $A$. \qed
\end{corollary}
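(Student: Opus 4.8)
The plan is to show that a distance-regular graph $G$ simultaneously satisfies the hypotheses of \emph{both} Lemma~\ref{thm:sameDegreeSpectra} and Lemma~\ref{thm:sameTransmissionSpectra}. Lemma~\ref{thm:sameDegreeSpectra} then gives $\lambda_i(D^{\deg}) = k - \lambda_i(D)$, where $k$ is the common degree, and Lemma~\ref{thm:sameTransmissionSpectra} gives $\lambda_i(A^{\tr}) = r - \lambda_i(A)$, where $r$ is the common transmission; these say that the $D^{\deg}$-spectrum is an affine shift of the $D$-spectrum and the $A^{\tr}$-spectrum an affine shift of the $A$-spectrum, which is the content of the corollary. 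Regularity (every vertex has degree $b_0$) is built into the definition of distance-regularity, so the only point that genuinely needs checking is that $G$ is transmission-regular.

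To verify transmission-regularity, fix a vertex $u$ of a distance-regular graph $G$ of diameter $d$ and let $k_i$ be the number of vertices at distance $i$ from $u$. Counting in two ways the edges between the spheres $\{v : \dist(u,v)=i\}$ and $\{v : \dist(u,v)=i+1\}$ — each vertex of the first sphere has $b_i$ neighbours in the second, and each vertex of the second has $c_{i+1}$ neighbours in the first — yields $k_i b_i = k_{i+1}c_{i+1}$ for $0 \le i \le d-1$. Together with $k_0 = 1$ (and $c_1 = 1$, giving $k_1 = b_0$) this shows that the whole sequence $k_0,\dots,k_d$ is a function of the intersection array alone, hence independent of $u$. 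Therefore $\tr(u) = \sum_{i=1}^{d} i\,k_i$ equals the same constant $r$ at every vertex, so $\tr(G) = rI$ and Lemma~\ref{thm:sameTransmissionSpectra} applies. Under the reading the preceding discussion seems to intend, this already completes the proof, the only real content being the sphere-counting; the rest is a direct appeal to the two lemmas.

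If instead one wants the statement in the stronger sense — that two distance-regular graphs with the same $D$-spectrum (resp. $A$-spectrum) have the same $D^{\deg}$-spectrum (resp. $A^{\tr}$-spectrum) — one must also check that the shift constants are spectrum-determined. For the adjacency side this is immediate: two distance-regular graphs are $A$-cospectral exactly when they share the intersection array (cf.~\cite{brouwer1989}), and the intersection array pins down every $k_i$ and hence $r$. For the distance side one needs the common valency $k = k_1$ to be recoverable from the $D$-spectrum; for strongly regular graphs (diameter $2$) this is straightforward since $k_2 = n-1-k$ forces $k = 2(n-1) - \lambda_n(D)$ with $n$ the order of $G$ (using that $r = \lambda_n(D)$, the Perron eigenvalue of the nonnegative irreducible matrix $D$), and for larger diameter one has to argue within the Bose--Mesner algebra of $G$. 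I expect this last verification — recovering $k$ from the distance spectrum of a distance-regular graph of arbitrary diameter — to be the main obstacle; everything else reduces to the elementary sphere-counting above together with Lemmas~\ref{thm:sameDegreeSpectra} and~\ref{thm:sameTransmissionSpectra}.
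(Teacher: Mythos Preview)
Your proposal is correct and follows exactly the paper's route: the corollary is stated immediately after the remark that distance-regular graphs are transmission-regular, and the paper's ``proof'' is just the \qed, i.e.\ an appeal to Lemmas~\ref{thm:sameDegreeSpectra} and~\ref{thm:sameTransmissionSpectra}; your sphere-counting argument for transmission-regularity simply supplies the detail the paper leaves implicit. The extended discussion of the stronger reading (recovering $k$ from the $D$-spectrum across different graphs) goes beyond what the paper intends and is not needed here.
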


Now we turn to study the relationship between the spectrum of $A^{\tr}$ and the combinatorial structure of the graph.  
Let $G$ be a graph with $n$ vertices. 
The \emph{Wiener index} $W(G)$ of $G$ is sum of the transmissions of all the vertices of $G$:
\[
W(G) \ = \sum_{u\in V(G)} \tr(u). 
\] 
We use $W^{\deg}(G)$ to denote the sum of the transmissions weighted by the degrees:
\[
W^{\deg}(G) \ = \sum_{u\in V(G)} \deg(u)\tr(u). 
\]

\begin{theorem}
Let $G$ be a connected graph with $n$ vertices and let $T(G)$ be the set of all triangles contained in $G$. Then
\begin{align*}
\sum_{i=1}^n \lambda_i(A^{\tr}) \ &=\ W(G),   \\
\sum_{i=1}^n \lambda_i(A^{\tr})^2  &=\ 2|E(G)|\ + \sum_{u\in V(G)} \tr(u)^2, \\
\sum_{i=1}^n \lambda_i(A^{\tr})^3  &=\ 6|T(G)|\ +\ W^{\deg}(G)\ +\sum_{u\in V(G)} \tr(u)^3.
\end{align*}
\end{theorem}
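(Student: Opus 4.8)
The plan is to compute the power sums $\sum_i \lambda_i(A^{\tr})^p = \operatorname{tr}\bigl((A^{\tr})^p\bigr)$ for $p = 1, 2, 3$ directly, using $A^{\tr} = \tr(G) - A$ where $\tr(G) = \diag(\tr(u) : u \in V(G))$ is diagonal. For $p=1$ this is immediate: the trace of $A^{\tr}$ equals the trace of $\tr(G)$ (since $A$ has zero diagonal), which is exactly $\sum_{u} \tr(u) = W(G)$.

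For $p=2$, I would write $(A^{\tr})^2 = \tr(G)^2 - \tr(G)A - A\tr(G) + A^2$ and take the trace term by term. The cross terms $\tr(G)A$ and $A\tr(G)$ each have zero diagonal because $A$ does (multiplying by a diagonal matrix scales rows or columns but leaves the diagonal entries as diagonal-of-$A$ times a scalar, hence zero), so they contribute nothing. The term $\operatorname{tr}(\tr(G)^2) = \sum_u \tr(u)^2$, and $\operatorname{tr}(A^2) = \sum_{u}\deg(u) = 2|E(G)|$, the standard fact that the $(u,u)$ entry of $A^2$ counts closed walks of length $2$. Summing gives the stated formula.

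For $p=3$, expand $(A^{\tr})^3 = (\tr(G) - A)^3$ into eight terms. The purely diagonal term $\tr(G)^3$ contributes $\sum_u \tr(u)^3$. The term $-\operatorname{tr}(A^3) $ contributes $-6|T(G)|$ with a sign I must track carefully — here the key point is that $(A^{\tr})^3 = (A - \tr(G))^3 \cdot(-1)^3$ is not quite what we want, so I should instead expand $(\tr(G)-A)^3$ honestly: it equals $\tr(G)^3 - \tr(G)^2 A - \tr(G)A\tr(G) - A\tr(G)^2 + \tr(G)A^2 + A\tr(G)A + A^2\tr(G) - A^3$. The three terms with exactly one factor of $A$ have zero diagonal (diagonal products route through a diagonal entry of $A$), so they vanish under trace. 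The term $-\operatorname{tr}(A^3) = -6|T(G)|$ uses that the $(u,u)$ entry of $A^3$ counts closed walks of length $3$, i.e.\ $2$ times the number of triangles through $u$, and summing over $u$ counts each triangle $6$ times — but with the minus sign this would give $-6|T(G)|$, contradicting the claimed $+6|T(G)|$. The resolution, which I will need to present cleanly, is that the relevant matrix whose trace should be compared is $(A^{\tr})^3$ versus what the identity actually yields; rechecking, the $A^3$ term in the expansion of $(\tr(G)-A)^3$ is $(-1)^3 A^3 = -A^3$, so I suspect the intended matrix in the theorem may effectively use $\tr(G)+A$ conventions or the triangle count enters through the three terms $\tr(G)A^2 + A\tr(G)A + A^2\tr(G)$ instead. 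That is the one subtle point I would verify first.

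The genuinely substantive computation is $\operatorname{tr}\bigl(\tr(G)A^2 + A\tr(G)A + A^2\tr(G)\bigr)$. By cyclicity of trace all three equal $\operatorname{tr}(\tr(G)A^2)$, so the sum is $3\operatorname{tr}(\tr(G)A^2) = 3\sum_u \tr(u)\,(A^2)_{uu} = 3\sum_u \tr(u)\deg(u) = 3 W^{\deg}(G)$. Combined with the sign of the cross-diagonal terms this should reconcile with $W^{\deg}(G)$ appearing once — which tells me the correct reading is that the $(A^{\tr})^3$ expansion must be done with the actual signs and the coefficient $3$ on $W^{\deg}$ gets absorbed differently, or the theorem statement implicitly normalizes. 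I will pin down the exact bookkeeping of these coefficients as the first step, then the rest is the routine combinatorial identification of $(A^2)_{uu} = \deg(u)$ and $(A^3)_{uu} = 2\,(\text{number of triangles through } u)$, together with $\operatorname{tr}(\tr(G)^p) = \sum_u \tr(u)^p$. The main obstacle is entirely this sign-and-coefficient reconciliation in the cubic case; everything else is a direct expansion using the diagonality of $\tr(G)$ and the zero diagonal of $A$.
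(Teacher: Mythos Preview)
Your approach---computing $\operatorname{tr}\bigl((A^{\tr})^p\bigr)$ by expanding $(\tr(G)-A)^p$ and using the standard facts $\operatorname{tr}(A)=0$, $\operatorname{tr}(A^2)=2|E(G)|$, $\operatorname{tr}(A^3)=6|T(G)|$---is exactly what the paper does; its proof is a two-line sketch invoking the same ingredients. Your handling of $p=1$ and $p=2$ is complete and correct.

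Your hesitation in the cubic case is not a gap in your argument but a correct detection of an error in the printed statement. The expansion you wrote down,
\[
(\tr(G)-A)^3 = \tr(G)^3 - (\text{three terms with one }A) + (\tr(G)A^2 + A\tr(G)A + A^2\tr(G)) - A^3,
\]
is right, and taking traces gives
\[
\sum_{i=1}^n \lambda_i(A^{\tr})^3 \;=\; \sum_{u}\tr(u)^3 \;+\; 3\,W^{\deg}(G)\;-\;6\,|T(G)|,
\]
not the formula $\sum_u \tr(u)^3 + W^{\deg}(G) + 6|T(G)|$ claimed in the paper. A quick sanity check on $K_3$ confirms this: there $A^{\tr}=2I-A$ has eigenvalues $0,3,3$, so $\sum\lambda_i^3=54$, whereas the paper's formula gives $6+12+24=42$ and the corrected formula gives $-6+36+24=54$. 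The path $P_3$ (eigenvalues $1,3,4$, cube-sum $92$) gives the same discrepancy: the paper's formula yields $72$, the corrected one $92$. So stop trying to reconcile your coefficients with the printed identity---trust your expansion. The paper's proof is too terse to reveal where the sign and the factor of $3$ were lost.
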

\begin{proof}[\emph{Proof}]
On the one hand, the sum of the diagonal entries of a matrix $M$ equals the sum of its eigenvalues.  
On the other, the eigenvalues of $M^k$  are the $k$-th powers of the eigenvalues of $M$. 
Since $\sum_{i=1}^n \lambda_i(A) = 0$, $\sum_{i=1}^n \lambda_i(A)^2 = 2|E(G)|$ and $\sum_{i=1}^n \lambda_i(A)^3 = 6|T(G)|$, the result follows. 
\end{proof}

Note that obtaining a similar expression for the fourth moment of the eigenvalues of $A^{\tr}$ seems to be complicated, because the fourth powers of the eigenvalues of the adjacency matrix of $G$ does not determine the number of 4-cycles. 

We write $u\sim v$ to denote that the vertices $u$ and $v$ are adjacent. 
Recall that the transmission $\tr(u)$ of a vertex $u$ from a graph $G$ is defined as 
\[
\tr(u)\ = \sum_{v\in V(G)} \dist(u,v). 
\]
Since the distance between any two adjacent vertices is 1, we have 
\[
\deg(u) \ = \sum_{v:u\sim v} \dist(u,v)
\]
where the sum is taken over all the vertices which are  adjacent to $u$.  
Hence we can always decompose the transmission of $u$ as 
\[
\tr(u) = 
\deg(u) + \sum_{v:u\not\sim v} \dist(u,v).
\]
If we let $R(G)$ be the diagonal matrix with $\tr(u) - \deg(u)$ in the diagonal, then
\[
A^{\tr}(G) = L(G) + R(G).
\]
Since $A^{\tr}$, $L$ and $R $ are real symmetric matrices, we can use Weyl's inequalities to obtain
\[
\max_{1\leq j\leq i} \{\lambda_{i-j+1}(L) + \lambda_{j}(R)\} \ \leq \  \lambda_i(A^{\tr}) \ \leq \min_{0\leq j\leq n-i} \{\lambda_{i+j}(L) + \lambda_{n-j}(R)\}
\]
for all $i = 1,\dots,n$. 
In particular, for each $i$ we have 
\[
\lambda_{i}(L) + \lambda_{1}(R) \ \leq \  \lambda_i(A^{\tr}) \ \leq \ \lambda_{i}(L) + \lambda_{n}(R).
\]

\begin{theorem}
Let $G$ be a connected graph with $n$ vertices. 
Then 
\[
\min_{u\in V(G)}\{\tr(u) - \deg(u)\}
\ \leq \
\lambda_1(A^{\tr}) 
\ \leq \ 
\frac{1}{n}\sum_{u\in V(G)} (\tr(u)-\deg(u)).
\]
\end{theorem}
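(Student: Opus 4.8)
The plan is to deduce both inequalities from the decomposition $A^{\tr}(G) = L(G) + R(G)$ established just above, where $R(G) = \diag(\tr(u)-\deg(u))_{u\in V(G)}$, together with the two elementary facts that $L(G)$ is positive semidefinite and $L(G)\1 = \mathbf{0}$ (both of which hold for every graph, connectivity being used only to ensure that the transmissions, and hence $R(G)$ and $A^{\tr}(G)$, are well defined).

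For the lower bound I would simply specialize the Weyl inequality already displayed, namely $\lambda_i(L) + \lambda_1(R) \leq \lambda_i(A^{\tr})$, to the case $i = 1$. Since $L(G)$ is positive semidefinite with null vector $\1$, we have $\lambda_1(L) = 0$; and since $R(G)$ is diagonal, $\lambda_1(R) = \min_{u\in V(G)}\{\tr(u)-\deg(u)\}$. Combining these gives $\min_{u\in V(G)}\{\tr(u)-\deg(u)\} \leq \lambda_1(A^{\tr})$.

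For the upper bound I would use the Courant--Fischer characterization $\lambda_1(A^{\tr}) = \min_{\|\x\|=1} \x^T A^{\tr}\x$ and test it on the normalized all-ones vector $\x = n^{-1/2}\1$. This yields
\[
\lambda_1(A^{\tr}) \ \leq \ \frac{1}{n}\,\1^T A^{\tr}(G)\,\1 \ = \ \frac{1}{n}\left(\1^T L(G)\1 + \1^T R(G)\1\right) \ = \ \frac{1}{n}\sum_{u\in V(G)}(\tr(u)-\deg(u)),
\]
where the $L$-term vanishes because $L(G)\1 = \mathbf{0}$ and the $R$-term equals $\sum_u(\tr(u)-\deg(u))$ since $R(G)$ is diagonal. (Equivalently, one may observe $\1^T A^{\tr}(G)\1 = W(G) - 2|E(G)| = \sum_u(\tr(u)-\deg(u))$.)

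I do not anticipate a genuine obstacle here: the argument is a short application of Weyl's inequality on one side and of the Rayleigh-quotient bound with an explicit test vector on the other. The only point requiring a moment of care is noting that $\lambda_1(L) = 0$ and $L\1 = \mathbf{0}$ are valid for an arbitrary (not necessarily regular) graph, so that no hypothesis beyond connectedness is needed; as a minor remark, the lower bound also shows incidentally that $A^{\tr}(G)$ is always positive semidefinite.
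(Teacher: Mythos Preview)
Your argument is correct and follows essentially the same route as the paper: Weyl's inequality with $\lambda_1(L)=0$ for the lower bound, and the Rayleigh quotient at the normalized all-ones vector for the upper bound. The only cosmetic difference is that the paper writes the quadratic form as $\x^T A^{\tr}\x = \sum_{i\sim j}(x_i-x_j)^2 + \sum_i \lambda_i(R)x_i^2$ before plugging in $\tfrac{1}{\sqrt n}\1$, whereas you invoke $L\1=\mathbf{0}$ directly; these are equivalent.
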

\begin{proof}[\emph{Proof}]
For each vector $\x = (x_1,\dots, x_n)^T\in \re^n$ we have 
\[
\x^T A^{\tr} \x\ = \ \sum_{i\sim j}(x_i-x_j)^2 + \sum_i \lambda_i(R) x_i^2
\]
where the first sum is taken over all the unordered pairs $\{i,j\}$ that are edges of $G$. 
Let $\1 \in \re^n$ denote the vector with each entry equal to $1$.
Then 
\[
\lambda_1(A^{\tr})\ = \ \min_{\| \x \| = 1} \x^T A^{\tr} \x \ \leq\ \tfrac{1}{\sqrt{n}}\1^T A^{\tr} \tfrac{1}{\sqrt{n}}\1
\]
and so 
\[
 \lambda_1(A^{\tr})\ \leq\ \frac{1}{n}\sum_i \lambda_i(R). 
\] 
Since $R$ is the diagonal matrix that contains the differences $\tr(i) - \deg(i)$ in the diagonal, the average of its eigenvalues is equal to the average of these differences. 
This proves the upper bound. 
 For the lower bound, note that $\lambda_1(L) = 0$ always. Then 
 \[
 \lambda_{1}(R) \ \leq \  \lambda_1(A^{\tr}). 
 \]
 The result follows from the fact that the smallest eigenvalue $\lambda_{1}(R)$ of $R$ is equal to the smallest entry of $R$. 
\end{proof}

A \emph{generalized Laplacian} of $G$ is any symmetric $n\times n$ matrix $M$ satisfying $M_{uv} < 0$ if   $u\sim v$ and $M_{uv} = 0$ if $u\neq v$ and $u\not\sim v$. 
For instance, the Laplacian matrix $L$ is a generalized Laplacian, and so is the transmission-adjacency matrix $A^{\tr}$. 

\begin{theorem}{\cite{agt}}
    If $G$ is connected, then $\lambda_1(A^{\tr})$ is simple and the corresponding eigenvector can be taken to have all its entries positive.
\end{theorem}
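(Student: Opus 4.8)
The plan is to recognise the statement as an instance of the Perron--Frobenius theorem, using the observation already recorded above that $A^{\tr}(G)$ is a generalized Laplacian of the connected graph $G$: its off-diagonal $uv$-entry is $-1$ when $u\sim v$ and $0$ otherwise, while its diagonal entries are the (nonnegative) transmissions. First I would fix an integer $N$ large enough --- any $N$ exceeding $\Th(G)$ together with all eigenvalues of $A^{\tr}$ works --- and set $B:=NI-A^{\tr}(G)$. Then every entry of $B$ is nonnegative, and in fact $B_{uv}>0$ exactly when $u=v$ or $u\sim v$. Hence the digraph associated with $B$ is $G$ with a loop adjoined at each vertex, which is strongly connected because $G$ is connected; that is, $B$ is an irreducible nonnegative matrix.

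Next I would apply Perron--Frobenius to $B$: its spectral radius $\rho(B)$ is a simple eigenvalue of $B$ and has a strictly positive eigenvector. Since $B=NI-A^{\tr}$, the eigenvalues of $B$ are the numbers $N-\la_i(A^{\tr})$, and by the choice of $N$ they are all positive, so $\rho(B)=\max_i\bigl(N-\la_i(A^{\tr})\bigr)=N-\la_1(A^{\tr})$. Consequently $\la_1(A^{\tr})=N-\rho(B)$ is a simple eigenvalue of $A^{\tr}(G)$, and the Perron eigenvector of $B$, being positive, is an eigenvector of $A^{\tr}(G)$ for $\la_1(A^{\tr})$. This proves both claims, and (since the result is attributed to \cite{agt}) one may equally just invoke the corresponding statement there for generalized Laplacians.

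If instead one wants a self-contained argument, I would use the Rayleigh-quotient identity established above, $\x^T A^{\tr}\x=\sum_{i\sim j}(x_i-x_j)^2+\sum_i\la_i(R)x_i^2$ with $\la_i(R)=\tr(i)-\deg(i)\ge 0$. Given a unit eigenvector $\x$ for $\la_1(A^{\tr})$, replacing each coordinate $x_i$ by $|x_i|$ leaves $\sum_i\la_i(R)x_i^2$ unchanged and cannot increase $\sum_{i\sim j}(x_i-x_j)^2$, because $\bigl||x_i|-|x_j|\bigr|\le|x_i-x_j|$; since $|\x|$ is still a unit vector, it also minimises the Rayleigh quotient and is therefore an eigenvector for $\la_1(A^{\tr})$. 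Then I would show $|\x|$ has no zero coordinate: if $|x_u|=0$, the $u$-th row of $A^{\tr}|\x|=\la_1(A^{\tr})|\x|$ gives $\sum_{v\sim u}|x_v|=0$, forcing $|x_v|=0$ for all neighbours $v$ of $u$, and connectivity then forces $\x=0$, a contradiction. Hence every $\la_1(A^{\tr})$-eigenvector can be chosen positive; as two strictly positive vectors are never orthogonal, the $\la_1(A^{\tr})$-eigenspace is one-dimensional.

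The step I expect to demand the most care is exactly the passage from an arbitrary eigenvector to a positive one: in the first approach this is hidden inside the verification that $B$ is irreducible and the identification $\rho(B)=N-\la_1(A^{\tr})$, and in the second it is the ``no zero coordinate'' propagation argument using connectedness. Everything else --- choosing $N$, translating eigenvalues of $B$ back to those of $A^{\tr}$, and manipulating the Rayleigh quotient --- is routine bookkeeping.
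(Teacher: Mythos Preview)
The paper does not prove this theorem at all: it is stated with the citation \cite{agt} and no argument is given, precisely because $A^{\tr}$ is a generalized Laplacian and the result for such matrices is quoted directly from Godsil--Royle. Your own remark that ``one may equally just invoke the corresponding statement there for generalized Laplacians'' is therefore exactly the paper's approach.

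Both of the arguments you sketch are correct and standard, and either would serve as the omitted proof. In the Perron--Frobenius route everything is clean once $N>\Th(G)$ makes $B$ nonnegative with positive diagonal (hence irreducible and primitive, since $G$ is connected), so that $\rho(B)=N-\la_1(A^{\tr})$ is the simple Perron root. In the Rayleigh-quotient route the one place to tighten is the final sentence: from ``$|\x|$ is a positive eigenvector'' it does not immediately follow that \emph{every} $\la_1$-eigenvector is, up to sign, positive. The missing line is that equality in $\bigl||x_i|-|x_j|\bigr|\le|x_i-x_j|$ must hold on every edge (since $\x$ and $|\x|$ give the same Rayleigh quotient), which forces $x_ix_j\ge 0$ along edges; connectivity together with your ``no zero coordinate'' argument then makes all coordinates of $\x$ strictly of one sign, after which the orthogonality argument for simplicity is valid.
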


The smallest non-trivial eigenvalue of $L$ is $\lambda_2(L)$ because $\lambda_1(L) = 0$ always. 
This eigenvalue can be computed as
\[
\lambda_2(L) \ =  \min_{\|\x\| = 1 : \x^T\1 = 0} \x^T L \x
\]
where the minimum is taken over all unit vectors which are orthogonal to the all-ones vector $\1$, see e.g.~\cite[Corollary 13.4.2]{agt}. 
Now for any subset of vertices $S \subseteq V(G)$, the \emph{edge-boundary} $\partial S$ of $S$ is defined as $\partial S = \{\{u,v\}\in E(G) : u\in S, v\in V(G)\setminus S\}$. 
The \emph{conductance} $\Phi(G)$ of a graph $G$ on $n$ vertices is defined as 
\[
\Phi(G) \ = \min_{S \subseteq V(G):|S| \leq n/2} \frac{|\partial S|}{|S|}. 
\]
It is a well-known result (see~\cite[Lemma 13.7.1]{agt}) that 
\[
\lambda_2(L)\ \leq\ \frac{n|\partial S|}{|S|(n - |S|)}
\]
from which it follows that
\[
\lambda_2(L)\ \leq\ 2 \Phi(G).
\]
Also, it can be shown that
\[
\frac{\Phi(G)^2}{2\Delta(G)}\ <\ \lambda_2(L).
\]

\begin{theorem}\label{thm:boundscomb}
Let $G$ be a connected graph with $n$ vertices. 
Then
\[
\frac{\Phi(G)^2}{2\Delta(G)} + \min_{u\in V(G)}\{\tr(u) - \deg(u)\}\ <\ \lambda_2(A^{\tr})\ \leq\ 2 \Phi(G) + \max_{u\in V(G)}\{\tr(u) - \deg(u)\}.
\]
\end{theorem}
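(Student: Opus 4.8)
The plan is to combine the decomposition $A^{\tr}(G) = L(G) + R(G)$ from the discussion above (where $R(G)$ is the diagonal matrix with entries $\tr(u) - \deg(u)$) with the Weyl-type inequality already derived for the second eigenvalue, and then to insert the two Cheeger-type estimates on $\lambda_2(L)$ recalled just before the statement. No genuinely new idea is needed; the work is purely bookkeeping.

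First I would record that, since $R(G)$ is diagonal, its eigenvalues are exactly its diagonal entries, so $\lambda_1(R) = \min_{u\in V(G)}\{\tr(u)-\deg(u)\}$ and $\lambda_n(R) = \max_{u\in V(G)}\{\tr(u)-\deg(u)\}$. Applying the specialized Weyl inequality displayed above (the case $i=2$ of $\lambda_i(L)+\lambda_1(R) \leq \lambda_i(A^{\tr}) \leq \lambda_i(L)+\lambda_n(R)$) gives
\[
\lambda_2(L) + \min_{u\in V(G)}\{\tr(u)-\deg(u)\} \ \leq\ \lambda_2(A^{\tr}) \ \leq\ \lambda_2(L) + \max_{u\in V(G)}\{\tr(u)-\deg(u)\}.
\]
Then I would substitute the bounds $\frac{\Phi(G)^2}{2\Delta(G)} < \lambda_2(L) \leq 2\Phi(G)$ recalled in the text: the lower one into the left-hand inequality to get $\lambda_2(A^{\tr}) > \frac{\Phi(G)^2}{2\Delta(G)} + \min_{u}\{\tr(u)-\deg(u)\}$, and the upper one into the right-hand inequality to get $\lambda_2(A^{\tr}) \leq 2\Phi(G) + \max_{u}\{\tr(u)-\deg(u)\}$. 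Chaining these yields the assertion.

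There is essentially no obstacle here. The one point worth a sentence of care is the strictness of the left inequality: it is inherited directly from the strict discrete Cheeger inequality $\frac{\Phi(G)^2}{2\Delta(G)} < \lambda_2(L)$, since adding the constant $\lambda_1(R) = \min_u\{\tr(u)-\deg(u)\}$ to both sides preserves strict inequality. (If one wanted to also note the connectedness hypothesis is used only through $\lambda_2(L) > 0$ and through the validity of the Weyl bound, that could be mentioned, but it is not strictly necessary for the chain of inequalities.)
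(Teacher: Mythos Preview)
Your proposal is correct and follows essentially the same approach as the paper: apply the Weyl inequality $\lambda_2(L)+\lambda_1(R)\le \lambda_2(A^{\tr})\le \lambda_2(L)+\lambda_n(R)$, identify $\lambda_1(R)$ and $\lambda_n(R)$ as the minimum and maximum of $\tr(u)-\deg(u)$, and then insert the Cheeger-type bounds on $\lambda_2(L)$ recalled just before the statement. Your remark on why the left inequality is strict is a welcome clarification the paper leaves implicit.
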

\begin{proof}[\emph{Proof}] 
By Weyl’s inequalities, we have 
\[
\lambda_2(L) + \lambda_1(R)\ \leq \ \lambda_2(A^{\tr})\ \leq\ \lambda_2(L) + \lambda_n(R).
\]
Since 
$\lambda_1(R) = \min\{\tr(u) - \deg(u) : u \in V(G)\}$ and $\lambda_n(R) = \max\{\tr(u) - \deg(u) : u \in V(G)\}$, the result follows from the above discussion concerning the conductance and maximum degree of $G$. 
\end{proof}

 Note that the bounds for $\lambda_2(A^{\tr})$ provided in Theorem~\ref{thm:boundscomb} are worst than the spectral bounds given by Weyl's inequalities.  
 We include them here just because they have a purely combinatorial interpretation.
 
Finally, we have the following results which relate the multiplicity of $\lambda_2(A^{\tr})$ to certain structural properties of the graph. 
These two results are true for any generalized Laplacian.

\begin{theorem}{\cite{agt}}
    If $G$ is 2-connected and outerplanar, then $\lambda_2(A^{\tr})$ has multiplicity at most two. 
\end{theorem}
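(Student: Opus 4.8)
The matrix $A^{\tr}$ is a \emph{generalized Laplacian} of $G$: as noted just before the statement, its off-diagonal $uv$-entry equals $-1$ when $u\sim v$ and $0$ otherwise. Hence the theorem is the special case $M=A^{\tr}$ of the corresponding statement for an arbitrary generalized Laplacian of a $2$-connected outerplanar graph, which is the form in which it appears in \cite{agt}. The plan is therefore simply to invoke that result; for completeness I sketch the underlying argument, since it is short once the right structural lemma is in hand.

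First I would normalize by replacing $A^{\tr}$ with $M:=A^{\tr}-\lambda_2(A^{\tr})\,I$. This is again a generalized Laplacian (subtracting a multiple of the identity changes only the diagonal), and by the Perron-type theorem quoted above, applied to $A^{\tr}$, its smallest eigenvalue $\lambda_1(M)=\lambda_1(A^{\tr})-\lambda_2(A^{\tr})$ is negative and simple, with a strictly positive eigenvector, while $\lambda_2(M)=0$. Thus $\dim\ker M$ equals the multiplicity of $\lambda_2(A^{\tr})$, and $M$ has exactly one negative eigenvalue. I would argue by contradiction, assuming $\dim\ker M\geq 3$. The key input is a discrete nodal-domain fact valid for generalized Laplacians with a single negative eigenvalue: for every nonzero $x\in\ker M$ the positive support $\{v:x_v>0\}$ and the negative support $\{v:x_v<0\}$ each induce a connected subgraph of $G$, and both are nonempty (the latter because $x$ is orthogonal to the positive Perron eigenvector of $M$).

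Then I would use outerplanarity. Since $G$ is $2$-connected and outerplanar, the boundary of its outer face is a Hamilton cycle $C=v_1v_2\cdots v_nv_1$. For any two vertices $u,w$ on $C$, the linear conditions $x_u=x_w=0$ cut out a subspace of $\ker M$ of dimension at least $\dim\ker M-2\geq 1$, so there is a nonzero $x\in\ker M$ vanishing at $u$ and $w$; its positive and negative supports are then disjoint connected sets avoiding $u$ and $w$, hence meet $C$ inside the two arcs of $C\setminus\{u,w\}$, and connectivity inside $G$ together with the planar embedding forces each support to sit in a single such arc. Feeding this rigidity back, one compares kernel vectors realizing the possible sign patterns along $C$: a suitable combination of two of them is a nonzero kernel vector supported on a proper induced subgraph whose positive or negative support is disconnected, contradicting the nodal-domain lemma; equivalently, one extracts a $K_4$- or $K_{2,3}$-minor from three independent nodal patterns, contradicting the forbidden-minor characterization of outerplanar graphs. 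The hard part is exactly this last step — converting a $\geq 3$-dimensional kernel plus the nodal-domain rigidity into a genuine planarity/crossing obstruction, which is where $2$-connectedness and the outerplanar embedding are really used; everything else is bookkeeping. For the present paper it is enough to cite \cite{agt}, since $A^{\tr}$ has just been shown to be a generalized Laplacian.
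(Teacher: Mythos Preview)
Your proposal is correct and matches the paper's treatment exactly: the paper does not prove this theorem but simply observes (in the sentence preceding it) that $A^{\tr}$ is a generalized Laplacian and then cites \cite{agt} for the general result, which is precisely what you do. Your additional sketch of the argument from \cite{agt} goes beyond what the paper provides, but the core reduction is the same.
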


\begin{theorem}{\cite{agt}}
    If $G$ is 3-connected and planar, then $\lambda_2(A^{\tr})$ has multiplicity at most three. 
\end{theorem}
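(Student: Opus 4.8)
The starting point is the decomposition $A^{\tr}(G)=L(G)+R(G)$ with $R(G)$ diagonal, already recorded above: this shows $A^{\tr}$ is a \emph{generalized Laplacian} of $G$, since its off-diagonal entries are $-1$ on edges and $0$ on non-edges. Consequently the statement is a particular instance of the general phenomenon — due in this form to Colin de Verdière and van der Holst, see \cite{agt} — that for a $3$-connected planar graph the second-smallest eigenvalue of \emph{any} generalized Laplacian has multiplicity at most three. So the plan is to check that $A^{\tr}$ satisfies the hypotheses of that principle and invoke it; below I also indicate how one would argue from first principles, since the only nontrivial ingredient is topological.

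First I would record the spectral structure of $A^{\tr}$ the argument rests on. Because $G$ is connected, the Perron--Frobenius-type result quoted above gives that $\lambda_1(A^{\tr})$ is simple, with a strictly positive eigenvector $\varphi$. A standard nodal-domain argument for generalized Laplacians then constrains the sign pattern of any eigenvector $x$ belonging to $\lambda_2(A^{\tr})$: since $x\perp\varphi$, the vertex sets $\{v:x_v>0\}$ and $\{v:x_v<0\}$ each induce a connected subgraph of $G$, and these are the only two sign classes, so $x$ has exactly two strong nodal domains. This is the same structural input used in the outerplanar case treated just above.

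Next, suppose for contradiction that $\lambda_2(A^{\tr})$ has multiplicity $m\ge 4$, and let $E\subseteq\mathbb{R}^{V(G)}$ be its eigenspace. For any three vertices $v_1,v_2,v_3$, the conditions $x_{v_1}=x_{v_2}=x_{v_3}=0$ cut out a subspace of $E$ of dimension at least $m-3\ge 1$, so there is a nonzero $x\in E$ vanishing at $v_1,v_2,v_3$. Fix a planar embedding of $G$, which by $3$-connectedness and Whitney's theorem is essentially unique (the rotation system is determined up to reflection). Using the two-nodal-domain structure of $x$ together with a Jordan-curve / Euler's-formula count on this embedding — separating the positive region, the negative region and the zero set, and exploiting $3$-connectedness to control how these regions meet around each vertex and face — one derives a contradiction, in the same spirit as the proof that the Colin de Verdière parameter of a planar graph is at most $3$; choosing $v_1,v_2,v_3$ judiciously (for instance on a common face) is what makes the count close.

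The main obstacle is exactly this last topological step: turning ``multiplicity $\ge 4$'' into an impossible planar arrangement of nodal domains. It genuinely needs the full strength of $3$-connectedness (to pin down the embedding and the local cyclic order at each vertex) and does \emph{not} follow formally from the $2$-connected outerplanar bound of the preceding theorem — the outerplanar and planar cases are parallel results, each requiring its own embedding argument. For the purposes of this paper the cleanest route, once one has observed that $A^{\tr}$ is a generalized Laplacian, is simply to quote the generalized-Laplacian multiplicity theorem from \cite{agt}; a from-scratch proof would essentially reproduce that argument.
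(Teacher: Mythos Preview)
Your proposal is correct and matches the paper's approach exactly: the paper does not give a proof of this theorem but simply observes (a few lines earlier) that $A^{\tr}$ is a generalized Laplacian and then states the result with a citation to \cite{agt}, precisely as you recommend. Your additional sketch of the nodal-domain/topological argument is more detail than the paper itself supplies.
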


\section*{Acknowledgement}
The research of C.A. Alfaro is partially supported by the Sistema Nacional de Investigadores grant number 220797. 
The research of O. Zapata is partially supported by the Sistema Nacional de Investigadores grant number 620178.

\end{document}